	\theoremstyle{plain}
	\newcounter{n}
	\numberwithin{n}{section}
	\newtheorem{theo}[n]{Theorem}
	\newtheorem{cor}[n]{Corollary}
	\newtheorem{lm}[n]{Lemma}
	\newtheorem{prop}[n]{Proposition}
\theoremstyle{definition}
	\newtheorem{nt}[n]{Notation}
	\newtheorem{df}[n]{Definition}} 
	\newtheorem*{thmnonnum}{Theorem}
	\newcommand\lettre{{\mathcal G}}
	\renewcommand\epsilon{\varepsilon}
	\renewcommand\phi{\varphi}
	\newcommand\R{\mathbb{R}}
	\newcommand\s{\mathbb{S}}
	\newcommand\graphesJn{\widetilde{\mathcal D_k}}
	\newcommand\diagrammes{{\mathcal A_k}}
	\newcommand\diagrammesc{{\check{\mathcal A_k}}}
	\newcommand\nonprimitifs{{\mathcal N_k}}
	\newcommand\diagrammess{{\mathcal T_k}}
	\newcommand\primitifs{\mathcal P_k}
	\newcommandx\ordres[1][1=\Gamma]{{\mathfrak{O}(#1)}}
\newcommand\und{\underline}
\newcommand\Card{{\mathrm{Card}}}
\newcommand\sgn{\mathrm{sgn}}
\newcommand{\punct}[1]{{#1^\circ}}
\newcommand\sphamb{{M}}
\newcommand\spamb{{\punct \sphamb}}
\newcommandx\bouleambiante[1][1= M]{B(#1)}
\newcommandx\bM[1][1= M]{B(#1)}
\newcommandx\nbdinf[1][1= ]{\ifthenelse{\equal{#1}{}}{\punct{B_\infty}}{\punct{B_\infty(#1)}}}
\newcommandx\sommets[2][1=\Gamma,2= \empty]{\ifthenelse{\equal{#2}{}} {V(#1)}{V^*(#1)} }
\newcommand\sommetsinternes[1][\Gamma]{V_i(#1)}
\newcommand\sommetsexternes[1][\Gamma]{V_e(#1)}
\newcommand\aretes[1][\Gamma]{E(#1)}
\newcommand\aretesinternes[1][\Gamma]{E_i(#1)}
\newcommand\aretesexternes[1][\Gamma]{E_e(#1)}
\newcommand\Prop{{\omega}}
\newcommand\graphesnum{\widetilde{\mathcal G_k}}
\newcommand\univalents[1][\Gamma]{V_i(#1)}
\newcommandx\confignoeud[3][1=\Gamma, 2=\psi,3=\empty ]{C_{#1}#3(#2)}
\newcommand\config{C_2}
\newcommand\configM{\config(\spamb)}
\renewcommand\d{\mathrm d}
\newcommandx\formearete[2][1=e,2=\sigma]{{\omega^F_{#1,#2}}}
\newcommand{\crc}{circle (0.1)}
\newcommand{\edgi}{\draw[->, >=latex]}
\newcommand{\edge}{\draw[->, >=latex, dashed]}
\newcommand{\figJAC}{
\begin{tikzpicture}
\begin{scope}[scale=.5]
\fill(0,0) circle (0.1) (0, 2) circle (0.1) (0,-2) circle (0.1);
\fill (-1,0) circle (0.1) (-1, 2) circle (0.1) (-1,-2) circle (0.1);
\fill (-2,0) circle (0.1) (-2, 2) circle (0.1) (-2,-2) circle (0.1);
\fill (-4, 0) circle (0.1); 
\draw[dashed]  (0,2) -- (-.9,2);
\draw[dashed]  (-1.1, 2) -- (-1.9, 2);
\draw[dashed] (-2.06, 1.94) -- ++(225: 2.72) ;
\draw[dashed]  (-0.1,-2) -- (-.9,-2);
\draw[dashed]  (-1, -2) -- (-1.9, -2);
\draw[dashed]  (-2, -2) -- ++(135: 2.72) ;
\draw[dashed]  (-0.1,0) -- (-.9,0);
\draw[dashed] (-2.1, 0) --  (-3.9,0) ;
\draw[dashed] (-2, 0.1) --  (-2,1.9) ;
\draw[dashed] (-2, -0.1) --  (-2,-1.9) ;
\draw[dashed] (-1, 0.1) --  (-1,1.9) ;
\draw[dashed] (-1, -0.1) --  (-1,-1.9) ;
\draw[thick, ->] (0, -3) -- (0, 3);
\end{scope}
\begin{scope}[scale=.5]
\fill (2, 0) circle (0.1);
\fill (2, 0) ++ (0: 1) circle (0.1);
\fill (2, 0) ++ (120: 1) circle (0.1);
\fill (2, 0) ++ (240: 1) circle (0.1);
\draw[dashed] (2, 0) ++ (0:.1) -- ++(0:.8);
\draw[dashed] (2, 0) ++ (120:.1) -- ++(120:.8);
\draw[dashed] (2, 0) ++ (240:.1) -- ++(240:.8);
\draw[dashed] (2, 0) ++ (120:1) -- ++(-30:1.67);
\draw[dashed] (2, 0) ++ (240:1) -- ++(30:1.67);
\draw[dashed] (2, 0) ++ (120:1) -- ++(-90:1.67);
\end{scope}
\end{tikzpicture}
}
\newcommand\figureas{
\begin{tikzpicture}[scale=1.25] \useasboundingbox (-.4,-.3) rectangle (.4,.4);
\draw [dashed] (0,0) -- (0,-.4) (0,0) -- (50:.4) (0,0) -- (130:.4);
\fill (0,0) circle (1.5pt);
\end{tikzpicture} = - \begin{tikzpicture}[scale=1.25] \useasboundingbox (-.4,-.4) rectangle (.4,.3);
\draw [dashed](0,0) -- (0,-.4);
\draw [dashed] (0,0) .. controls (50:.3) ..  (130:.5);
\draw [draw=white,dashed, double=black,very thick] (0,0) .. controls (130:.3) ..  (50:.5);
\fill (0,0) circle (1.5pt);
\end{tikzpicture}}
\newcommand{\stuy}{\begin{tikzpicture}[scale=1.25, rotate=90] \useasboundingbox (-.1,-.75) rectangle (.7,.4);
\draw [->,thick] (0,-.5) -- (.6,-.5);
\draw [dashed] (.3,-.5) -- (.3,0) -- (.075,.3) (.3,0) -- (.525,.3);
\fill (.3,-.5) circle (1.5pt) (.3,0) circle (1.5pt);
\draw (.3, -.65) node {\tiny $\Gamma$};
\end{tikzpicture}}
\newcommand{\stui}{\begin{tikzpicture}[scale=1.25, rotate=90] \useasboundingbox (-.1,-.75) rectangle (.8,.4);
\draw [->,thick] (0,-.2) -- (.7,-.2);
\draw [dashed] (.15,-.2) -- (.15,.2)  (.45,.2) -- (.45,-.2);
\fill (.15,-.2) circle (1.5pt) (.45,-.2) circle (1.5pt);
\draw (.3, -.35) node {\tiny $\Gamma_1$};
\end{tikzpicture}}
\newcommand{\stux}{\begin{tikzpicture}[scale=1.25, rotate=90] \useasboundingbox (-.1,-.75) rectangle (.8,.4);
\draw [->,thick] (0,-.2) -- (.7,-.2);
\draw [dashed] (.15,.2) -- (.45,-.2);
\draw [draw=white,double=black,very thick, dashed] (.15,-.2) -- (.45,.2);
\fill (.15,-.2) circle (1.5pt) (.45,-.2) circle (1.5pt);
\draw (.3, -.35) node {\tiny $\Gamma_2$};
\end{tikzpicture}}
\newcommand\figurestu{
\stuy =\stui - \stux
}
\newcommand{\ihxone}{\begin{tikzpicture} \useasboundingbox (-.4,-.7) rectangle (.4,.6);
\draw [dashed] (-90:.6) -- (0,0) -- (30:.6) (0,0) -- (150:.6);
\draw [dashed] (-.5,-.25) -- (0,-.25);
\fill (0,-.25) circle (1.5pt) (0,0) circle (1.5pt);
\draw (-.2, -.6) node {\tiny $\Gamma_1$};
\end{tikzpicture}}
\newcommand{\ihxtwo}{\begin{tikzpicture} \useasboundingbox (-.4,-.7) rectangle (.4,.6);
\draw [dashed] (-90:.6) -- (0,0) -- (30:.6) (0,0) -- (150:.6);
\draw [dashed,out=0,in=-60, draw=white,double=black,very thick] (-.5,-.25) to (30:.25);
\fill (30:.25) circle (1.5pt) (0,0) circle (1.5pt);
\draw (-.2, -.6) node {\tiny $\Gamma_2$};
\end{tikzpicture}}
\newcommand{\ihxthree}{\begin{tikzpicture} \useasboundingbox (-.4,-.7) rectangle (.4,.6);
\draw [dashed]  (-90:.6) -- (0,0) -- (30:.6) (0,0) -- (150:.6);
\draw [dashed, draw=white,double=black,very thick] (-.5,-.25) .. controls (-.2,-.25) and  (.25,-.2)  .. (.25,0)  .. controls (.25,.2) and (120:.35) .. (150:.25);
\fill (150:.25) circle (1.5pt) (0,0) circle (1.5pt);
\draw (-.2, -.6) node {\tiny $\Gamma_3$};
\end{tikzpicture}}
\newcommand\figureihx{
\ihxone+\ihxtwo+\ihxthree= 0
}
\newcommand\graphek{\begin{tikzpicture}[scale = 0.4, xscale=-1]
\useasboundingbox (0, -3) rectangle (3, 3);
\draw [->,thick] (0, -3.2) -- (0,3.2); 
\draw [dashed](0, -2.5) -- (1, -2.5); 
\draw [dashed](0, -1.5) -- (1, -1.5); 
\draw [dashed](0, 1.5) -- (1, 1.5); 
\draw [dashed](0, 2.5) -- (1, 2.5); 
\draw [dashed](1, -2.5) -- (1, -1.5); 
\draw [dashed](1, -1.5) -- (1, 1.5); 
\draw [dashed](1, 1.5) -- (1, 2.5); 
\draw [dashed](1, 2.5) to [bend left = 80] (1, -2.5); 
\fill (0, -2.5) circle (.1) (1, -2.5) circle (.1);
\fill (0, -1.5) circle (.1) (1, -1.5) circle (.1);
\fill (0, 1.5) circle (.1) (1, 1.5) circle (.1);
\fill (0, 2.5) circle (.1) (1, 2.5) circle (.1);
\draw (1.4, -3.4) node {$\Gamma_k$};
\end{tikzpicture}
}
\newcommand\graphekA{\begin{tikzpicture}[scale=.8]
\draw [dashed, <-, >= latex] (3.1,0) -- (3.9,0) ;
\draw [dashed, ->, >= latex] (1,-0.1) to[bend right=60] (4,-0.1) ;
\draw [dotted] (1.2,0) -- (1.7,0) ;
\draw [dashed, <-, >= latex] (2.1,0) -- (2.9,0) ;
\draw [dashed, <-, >=latex] (1.1,0) -- (1.2,0) ;
\draw [dashed] (1.7,0) -- (1.9,0) ;
\fill (1, 1) circle (0.1);
\fill (2, 1) circle (0.1);
\fill (3, 1) circle (0.1);
\fill (4, 1) circle (0.1);
\draw [dashed, ->, >= latex] (1,0.9) -- (1,0.1) ;
\draw [dashed, ->, >= latex] (2,0.9) -- (2,0.1) ;
\draw [dashed, ->, >= latex] (3,0.9) -- (3,0.1) ;
\draw [dashed, ->, >= latex] (4,0.9) -- (4,0.1) ;
\draw (1, 0) circle (0.1);
\draw (2, 0) circle (0.1);
\draw (3, 0) circle (0.1);
\draw (4, 0) circle (0.1);
\draw (1,1) circle (0.1) ++(0,0.3) node {$v_{k}$};
\draw (2,1) circle (0.1) ++(0,0.3) node {$v_{3}$};
\draw (3,1) circle (0.1) ++(0,0.3) node {$v_{2}$};
\draw (4,1) circle (0.1) ++(0,0.3) node {$v_{1}$};
\draw (2.5, 2) node {$(\Gamma_k^{BCR}, \rho_a)$};
\end{tikzpicture}}
\newcommand\graphekB{\begin{tikzpicture}[scale=.8]
\draw [dashed, <-, >= latex] (1.1,0) -- (1.9,0) ;
\draw [dashed, ->, >= latex] (1,-0.1) to[bend right=60] (4, -0.1) ;
\draw [dotted] (3.2,0) -- (3.7,0) ;
\draw [dashed, <-, >= latex] (2.1,0) -- (2.9,0) ;
\draw [dashed, <-, >=latex] (3.1,0) -- (3.2,0) ;
\draw [dashed] (3.7,0) -- (3.9,0) ;
\draw (1, 0) circle (0.1);
\draw (2, 0) circle (0.1);
\draw (3, 0) circle (0.1);
\draw (4, 0) circle (0.1);
\fill (1, 1) circle (0.1);
\fill (2, 1) circle (0.1);
\fill (3, 1) circle (0.1);
\fill (4, 1) circle (0.1);
\draw (1,1) circle (0.1) ++(0,0.3) node {$v_{1}$};
\draw (2,1) circle (0.1) ++(0,0.3) node {$v_{ 2}$};
\draw (3,1) circle (0.1) ++(0,0.3) node {$v_{ 3}$};
\draw (4,1) circle (0.1) ++(0,0.3) node {$v_{k}$};
\draw [dashed, ->, >= latex] (1,0.9) -- (1,0.1) ;
\draw [dashed, ->, >= latex] (2,0.9) -- (2,0.1) ;
\draw [dashed, ->, >= latex] (3,0.9) -- (3,0.1) ;
\draw [dashed, ->, >= latex] (4,0.9) -- (4,0.1) ;
\draw (2.5, 2) node {$(\Gamma_k^{BCR}, \rho_b)$};
\end{tikzpicture}}
\newcommand\gammaun{
\begin{tikzpicture}[scale = .6, xscale =-1]
\draw [thick, ->] (0, -2) -- (0, 2);
\draw [dashed] (0, -1) -- (1, -1.5) (0, 1) -- (1, 1.5);
\fill (0, -1) circle (0.1) (0,1) circle (0.1); 
\draw (-.5, -1) node {$v$} (-.5, 1) node {$w$};
\draw (.85, -1) node {$e$} (.85, 1) node {$f$};
\draw (0, -2.5) node {$\Gamma_J^{(1)}$};
\end{tikzpicture}}
\newcommand\gammadeux{
\begin{tikzpicture}[scale = .6, xscale=-1]
\draw [thick, ->] (0, -2) -- (0, 2);
\draw [dashed](0, -1) -- (1, 1.5) (0, 1) -- (1, -1.5);
\fill (0, -1) circle (0.1) (0,1) circle (0.1); 
\draw (-.5, -1) node {$w$} (-.5, 1) node {$v$};
\draw (1.2, -1) node {$e$} (1.2, 1) node {$f$};
\draw (0, -2.5) node {$\Gamma_J^{(2)}$};
\end{tikzpicture}}
\newcommand\gammatrois{
\begin{tikzpicture}[scale = .6, xscale=-1]
\draw [thick, ->] (0, -2) -- (0, 2);
\draw [dashed](0, 0) -- (1, 0);
\draw [dashed] (1,0) -- (1.5, 1.5) (1, 0) -- (1.5, -1.5);
\fill (0, 0) circle (0.1) (1,0) circle (0.1); 
\draw (-.6, 0) node {$u$} (1.25, 0) node {$t$};
\draw (1.7, -1) node {$e$} (1.7, 1) node {$f$};
\draw (.5, .35) node {$h$};
\draw (0, -2.5) node {$\Gamma_J$};
\end{tikzpicture}}
\newcommand\gammaBBa{
\begin{tikzpicture}[scale = .9]
\useasboundingbox (-1, -1) rectangle (3, 1.5);
\fill (0, 0) circle (0.1) (1, 0) circle (0.1) ;
\draw [->, >= latex, dashed] (-1, 0) -- (-.1, 0);
\draw [->, >= latex] (.1, 0) -- (.9, 0);
\draw [->, >= latex, dashed] (1.1, 0) -- (1.9, 0);
\draw (.5, -.6) node {$\Gamma$};
\draw (0, -.35) node {$v$};
\draw (1, -.35) node {$w$};
\draw (-.5, .25) node {$e$};
\draw (1.5, .25) node {$f$};
\end{tikzpicture}
}
\newcommand\gammaBBastar{
\begin{tikzpicture}[scale = .9]
\useasboundingbox (-1, -1) rectangle (3, 1.5);
\draw (0, 0) circle (0.1);
\fill  (0, -1) circle (0.1) ;
\draw [->, >= latex, dashed] (-1, 0) -- (-.1, 0);
\draw [->, >= latex, dashed] (.1, 0) -- (.9, 0);
\draw [->, >= latex, dashed] (0, -.9) -- (0, -0.1);
\draw (.75, -.6) node {$\Gamma^*$};
\draw (0, .35) node {$t$};
\draw (-.5, .25) node {$e$};
\draw (.5, .25) node {$f$};
\draw (0.15, -.5) node {$h$};
\draw (-.25, -1) node {$u$};
\end{tikzpicture}
}
\newcommand\gammaBBb{
\begin{tikzpicture}[scale = .9]
\useasboundingbox (-1, -1) rectangle (3, 1.5);
\fill (0, 0) circle (0.1) (1, 0) circle (0.1) ;
\draw [<-, >= latex, dashed] (-1, 0) -- (-.1, 0);
\draw [<-, >= latex] (.1, 0) -- (.9, 0);
\draw [<-, >= latex, dashed] (1.1, 0) -- (1.9, 0);
\draw (.5, -.6) node {$\Gamma$};
\draw (0, -.35) node {$v$};
\draw (1, -.35) node {$w$};
\draw (-.5, .25) node {$e$};
\draw (1.5, .25) node {$f$};
\end{tikzpicture}
}
\newcommand\gammaBBbstar{
\begin{tikzpicture}[scale = .9]
\useasboundingbox (-1, -1) rectangle (3, 1.5);
\draw (0, 0) circle (0.1);
\fill  (0, 1) circle (0.1) ;
\draw [<-, >= latex, dashed] (-1, 0) -- (-.1, 0);
\draw [<-, >= latex, dashed] (.1, 0) -- (.9, 0);
\draw [->, >= latex, dashed] (0, .9) -- (0, 0.1);
\draw (.75, -.6) node {$\Gamma^*$};
\draw (0, -.35) node {$t$};
\draw (-.5, .25) node {$e$};
\draw (.5, .25) node {$f$};
\draw (0.15,.5) node {$h$};
\draw (-.25, 1) node {$u$};
\end{tikzpicture}
}
\newcommand\gammaTBa{
\begin{tikzpicture}[yscale=-1, scale = .9]
\useasboundingbox (-1, -1.5) rectangle (3, 1.5);
\fill (0, 0) circle (0.1) (1, 0) circle (0.1) (0,1) circle (0.1) ;
\draw [->, >= latex] (-1, 0) -- (-.1, 0);
\draw [->, >= latex] (.1, 0) -- (.9, 0);
\draw [->, >= latex, dashed] (1.1, 0) -- (1.9, 0);
\draw [->, >=latex, dashed] (0,.9) --(0,.1);
\draw (.5, -.75) node {$\Gamma$};
\draw (0, -.35) node {$v$};
\draw (1, -.35) node {$w$};
\draw (.15, .5) node {$e$};
\draw (1.5, .25) node {$f$};
\draw (0.3, 1) node {$x$};
\end{tikzpicture}
}
\newcommand\gammaTBastar{
\begin{tikzpicture}[yscale = -1,scale = .9]
\useasboundingbox (-1, -1.5) rectangle (3, 1.5);
\fill (0, 0) circle (0.1) (0,1) circle (0.1) ;
\draw (1, 0) circle (0.1);
\draw [->, >= latex] (-1, 0) -- (-.1, 0);
\draw [->,dashed , >= latex] (.1, 0) -- (.9, 0);
\draw [->, >= latex, dashed] (1.1, 0) -- (1.9, 0);
\draw [<-, >=latex, dashed] (1,0) ++(135:.1) -- (0, 1);
\draw (.5, -.75) node {$\Gamma^*$};
\draw (0, -.35) node {$u$};
\draw (1, -.35) node {$t$};
\draw (.8, .6) node {$e$};
\draw (1.5, .25) node {$f$};
\draw (.5, -.25) node {$h$};
\draw (0.3, 1) node {$x$};
\end{tikzpicture}
}
\newcommand\gammaTBb{
\begin{tikzpicture}[scale = .9]
\useasboundingbox (-1, -1) rectangle (3, 1.5);
\fill (0, 0) circle (0.1) (1, 0) circle (0.1) (0,1) circle (0.1) ;
\draw [<-, >= latex] (-1, 0) -- (-.1, 0);
\draw [<-, >= latex] (.1, 0) -- (.9, 0);
\draw [<-, >= latex, dashed] (1.1, 0) -- (1.9, 0);
\draw [->, >=latex, dashed] (0,.9) --(0,.1);
\draw (.5, -.75) node {$\Gamma$};
\draw (0, -.35) node {$v$};
\draw (1, -.35) node {$w$};
\draw (.15, .5) node {$e$};
\draw (1.5, .25) node {$f$};
\draw (0.3, 1) node {$x$};
\end{tikzpicture}
}
\newcommand\gammaTBbstar{
\begin{tikzpicture}[scale = .9]
\useasboundingbox (-1, -1) rectangle (3, 1.5);
\fill (0, 0) circle (0.1) (0,1) circle (0.1) ;
\draw (1, 0) circle (0.1);
\draw [<-, >= latex] (-1, 0) -- (-.1, 0);
\draw [<-,dashed , >= latex] (.1, 0) -- (.9, 0);
\draw [<-, >= latex, dashed] (1.1, 0) -- (1.9, 0);
\draw [<-, >=latex, dashed] (1,0) ++(135:.1) -- (0, 1);
\draw (.5, -.75) node {$\Gamma^*$};
\draw (0, -.35) node {$u$};
\draw (1, -.35) node {$t$};
\draw (.8, .6) node {$e$};
\draw (1.5, .25) node {$f$};
\draw (.5, -.25) node {$h$};
\draw (0.3, 1) node {$x$};
\end{tikzpicture}
}
\newcommand\gammaBTa{
\begin{tikzpicture}[yscale=-1,scale = .9]
\useasboundingbox (-1, -1.5) rectangle (3, 1);
\fill (0, 0) circle (0.1) (1, 0) circle (0.1) (1,1) circle (0.1) ;
\draw [->,dashed, >= latex] (-1, 0) -- (-.1, 0);
\draw [->, >= latex] (.1, 0) -- (.9, 0);
\draw [->, >= latex] (1.1, 0) -- (1.9, 0);
\draw [->, >=latex, dashed] (1,.9) --(1,.1);
\draw (.5, -.75) node {$\Gamma$};
\draw (0, -.35) node {$v$};
\draw (1, -.35) node {$w$};
\draw (-.5, .25) node {$e$};
\draw (1.25, .5) node {$f$};
\draw (1.3, 1) node {$x$};
\end{tikzpicture}
}
\newcommand\gammaBTastar{
\begin{tikzpicture}[yscale = -1, scale = .9]
\useasboundingbox (-1, -1.5) rectangle (3, 1);
\fill (1, 0) circle (0.1) (1,1) circle (0.1) ;
\draw (0, 0) circle (0.1);
\draw [->, dashed, >= latex] (-1, 0) -- (-.1, 0);
\draw [->,dashed , >= latex] (.1, 0) -- (.9, 0);
\draw [->, >= latex] (1.1, 0) -- (1.9, 0);
\draw [<-, >=latex, dashed] (0,0) ++(45:.1) -- (1, 1);
\draw (.5, -.75) node {$\Gamma^*$};
\draw (0, -.35) node {$t$};
\draw (1, -.35) node {$u$};
\draw (-.5, .25) node {$e$};
\draw (.2, .65) node {$f$};
\draw (.5, -.25) node {$h$};
\draw (1.3, 1) node {$x$};
\end{tikzpicture}
}
\newcommand\gammaBTb{
\begin{tikzpicture}[scale = .9]
\useasboundingbox (-1, -1) rectangle (3, 1.5);
\fill (0, 0) circle (0.1) (1, 0) circle (0.1) (1,1) circle (0.1) ;
\draw [<-,dashed, >= latex] (-1, 0) -- (-.1, 0);
\draw [<-, >= latex] (.1, 0) -- (.9, 0);
\draw [<-, >= latex] (1.1, 0) -- (1.9, 0);
\draw [->, >=latex, dashed] (1,.9) --(1,.1);
\draw (.5,- .75) node {$\Gamma$};
\draw (0, -.35) node {$v$};
\draw (1, -.35) node {$w$};
\draw (-.5, .25) node {$e$};
\draw (1.25, .5) node {$f$};
\draw (1.3, 1) node {$x$};
\end{tikzpicture}
}
\newcommand\gammaBTbstar{
\begin{tikzpicture}[scale = .9]
\useasboundingbox (-1, -1) rectangle (3, 1.5);
\fill (1, 0) circle (0.1) (1,1) circle (0.1) ;
\draw (0, 0) circle (0.1);
\draw [<-, dashed, >= latex] (-1, 0) -- (-.1, 0);
\draw [<-,dashed , >= latex] (.1, 0) -- (.9, 0);
\draw [<-, >= latex] (1.1, 0) -- (1.9, 0);
\draw [<-, >=latex, dashed] (0,0) ++(45:.1) -- (1, 1);
\draw (.5,- .75) node {$\Gamma^*$};
\draw (0, -.35) node {$t$};
\draw (1, -.35) node {$u$};
\draw (-.5, .25) node {$e$};
\draw (.2, .65) node {$f$};
\draw (.5, -.25) node {$h$};
\draw (1.3, 1) node {$x$};
\end{tikzpicture}
}
\newcommand\wczero{
\begin{tikzpicture}
\draw[thick, ->] (0, - 1.5) -- (0, -1.3) (0, -1.1) -- (0, -.3) (0, .3) -- (0,1.1) (0,1.3)--(0, 1.5);
\fill (0, -.75) circle (0.1) (0, .75) circle (0.1);
\draw[dashed] (0, -.75) to[bend left = 70] (0, .75) ;
\draw[thick, ->] (4.5, - 1.5) -- (4.5, -1.3) (4.5, -1.1) -- (4.5, -.9) (4.5, -.6) -- (4.5, -.3) 
(4.5, .3) -- (4.5, .6) 
(4.5, .9)--(4.5, 1.1) (4.5, 1.3) --(4.5, 1.5);
\draw[thick] (4.5, -.9) to[bend left = 70] (4.5, .9) ;
\draw[thick] (4.5, -.6) to[bend left = 70] (4.5, .6) ;
\draw[->] (.5, 0) -- (3.8, 0);
\end{tikzpicture}
}
\newcommand\wcun{
\begin{tikzpicture}[xscale=.75]
\draw[thick, ->] (0, - 1.5) -- (0, 1.5);
\fill (0, -.75) circle (0.1) (0, .75) circle (0.1);
\draw[dashed] (0, -.75) to[bend left = 70] (0, .75) ;
\draw[thick, ->] (4.5, - 1.5) -- (4.5, -.95) (4.5, -.65) -- (4.5, .65) (4.5, .95)--(4.5, 1.5);
\draw[thick] (4.5, -.95) to[bend left = 70] (4.5, .95) ;
\draw[thick] (4.5, -.65) to[bend left = 70] (4.5, .65) ;
\draw[->] (.5, 0) -- (3.8, 0);
\draw (4.5, -1.75) node{\scriptsize{ $c=1$, $w_C(\Gamma_a)=0$}};
\draw (.25, -1.75) node{\scriptsize{ $\Gamma_a$}};
\end{tikzpicture}
}
\newcommand\wcdeux{
\begin{tikzpicture}[xscale=.75]
\draw[thick, ->] (0, - 1.5) -- (0, 1.5);
\fill (0, -.9) circle (0.1) (0, -.3) circle (0.1) (0, .3) circle (0.1)   (0, .9) circle (0.1);
\draw[dashed] (0, -.9) to[bend left = 70] (0, .3) ;
\draw[dashed] (0, .9) to[bend left = 70] (0, -.3) ;
\draw[thick, ->] (4.5, - 1.5) -- (4.5, -1.05) (4.5, -.75) -- (4.5, -.45) (4.5, -.15)--(4.5, .15)
(4.5, .45) -- (4.5, .75) (4.5, 1.05) -- (4.5, 1.5);
\draw[thick] (4.5, -1.05) to[bend left = 70] (4.5, .45) ;
\draw[thick] (4.5, -.75) to[bend left = 70] (4.5, .15) ;
\draw[thick] (4.5, 1.05) to[bend left = 70] (4.5, -.45) ;
\draw[thick] (4.5, .75) to[bend left = 70] (4.5,-.15) ;

\draw[->] (.5, 0) -- (3.8, 0);
\draw (4.5, -1.75) node{\scriptsize{ $c=0$, $w_C(\Gamma_b)=1$}};
\draw (.25, -1.75) node{\scriptsize{ $\Gamma_b$}};
\end{tikzpicture}
}
\newcommand\guna{
\begin{tikzpicture}
\fill (0,0) circle (0.1) (1, 0) circle (0.1) (0,1) circle (0.1) (1,1) circle (0.1);
\draw[->, >= latex] (2, 0) -- (1.1, 0);
\draw[->, >= latex] (.9, 0) -- (.1, 0);
\draw[->, >= latex] (-.1, 0) -- (-.9, 0);
\draw[->, dashed, >= latex] (0, .9) -- (0, .1);
\draw[->,dashed,  >= latex] (1, .9) -- (1, .1);
\draw (0, -.25) node{$w$} (1, -.25) node{$v$};
\draw (0, 1.25) node{$x$} (1, 1.25) node{$y$};
\draw (-.25, .5) node{$f$} (1.25, .5) node {$e$};
\end{tikzpicture}
}
\newcommand\gnc{
\begin{tikzpicture}
\draw (3, 0) circle (0.1);
\fill (3, 1) circle (0.1) (4, 0) circle (0.1) ++(-45:1) circle (0.1);
\draw[<-, >=latex] (4,0)++(-45:.1) -- ++(-45: .8);
\draw[->, dashed, >= latex] (3.9,0)--(3.1,0);
\draw[->, dashed, >= latex] (2.9,0)--(2.4,0);
\draw[->, dashed, >= latex] (3,.9)--(3, .1);
\draw (5, -.5) node {$v$}
(4.2, .2) node {$w$}
(3.2, .2) node {$t$}
(2.7, 1.1) node{$x$};
\end{tikzpicture}
}
\newcommand\gnccyc{
\begin{tikzpicture}
\fill (0,0) circle (.1) (1, 0) circle (.1) ++(-45:1) circle(.1);
\draw[<-, >=latex] (1,0) ++(-45:.1) --++(-45:.8);
\draw[<-, >=latex, dashed] (.1, 0) -- (.9,0);
\draw[->, >=latex] (-135:.1) --(-135:.8);
\draw (-.2, .3) node {$w_2$} (1.2, .3) node {$w_1$}
(1.7, -1.1) node {$v$};
\end{tikzpicture}
}
\newcommand\gncleg{
\begin{tikzpicture}
\fill (0,0) circle (.1) (1, 0) circle (.1) ++(-45:1) circle(.1);
\draw[->, >=latex] (1,0) ++(-45:.9) --(0.05, -0.05);
\draw[<-, >=latex, dashed] (.1, 0) -- (.9,0);
\draw[->, >=latex] (-135:.1) --(-135:.8);
\draw (-.2, .3) node {$w_2$} (1.2, .3) node {$w_1$}
(1.7, -1.1) node {$v$};
\end{tikzpicture}
}
\title{Bott-Cattaneo-Rossi invariants for long knots in asymptotic homology $\R^3$}
	\author{David Leturcq\footnote{Institut Fourier, Université-Grenoble-Alpes}}
	\date{ }
\begin{document}
\maketitle
\begin{abstract}
In this article, we express the Alexander polynomial of null-homologous long knots in 
punctured rational homology $3$-spheres 
in terms of integrals over configuration spaces.
To get such an expression,
we use a previously established formula,
which gives generalized Bott-Cattaneo-Rossi invariants in terms
of the Alexander polynomial and vice versa,
and we relate these Bott-Cattaneo-Rossi invariants to the perturbative
expansion of Chern-Simons theory.

\textbf{Keywords:} Knot theory, Configuration spaces, Alexander polynomial, Perturbative expansion of the Chern-Simons theory.

\textbf{MSC:} 55R80, 57K10, 57K14, 57K16.
\end{abstract}
\section{Introduction}

Knot invariants defined as combinations of integrals over configuration spaces or,
equivalently,
as combinations of algebraic counts of diagrams,
emerged after the seminal work of Witten \cite{[Wit]} on the perturbative
expansion of the Chern-Simons theory.
Knot invariants defined from spatial configurations of unitrivalent graphs were formally
defined by Guadagnini, Martellini and Mintchev \cite{GMM}, Bar-Natan \cite{BarNatanPCST}, Altschüler and Freidel \cite{Altschuler_1997},
Bott and Taubes \cite{Bott2017}, and others,
for knots in $\R^3$.
These invariants can be unified in an invariant $\mathcal Z= (\mathcal Z_k)_{k\in \mathbb N}$ defined by 
Altschüler and Freidel in \cite{Altschuler_1997} for knots in $\R^3$, and called the \emph{perturbative expansion of the Chern-Simons theory}.
The invariant $\mathcal Z$ takes its values in a vector space $\mathcal A= \prod\limits_{k\in\mathbb N} \mathcal A_k$, 
spanned by classes of Jacobi unitrivalent diagrams, 
precisely described in Definition \ref{ASSTU}. 
Altschüler and Freidel proved that $\mathcal Z$ is a universal Vassiliev invariant for knots in $\R^3$.
The Kontsevich integral $\mathcal Z^K=(\mathcal Z_k^K)_{k\in\mathbb N}$
described by Bar-Natan \cite{Bar-NatanTop}, and defined using integrals over spaces of planar configurations
is another universal Vassiliev invariant.
An article of Lescop \cite{LesJKTR} connects $\mathcal Z$ to $\mathcal Z^K$, up to the Bott and Taubes anomaly $\alpha$, 
and implies that the two invariants are equivalent.

In \cite{BNG},
Bar-Natan and Garoufalidis defined a linear form $w_C$ on $\bigoplus_{k\in\mathbb N}\mathcal A_k$ (see Definition \ref{wc0}), called the \emph{Conway weight system}, and they
expressed the Alexander polynomial $\Delta_\psi$ for knots in $\R^3$ as
\[ \Delta_\psi(e^h)=
\frac{2\sinh\left(\frac{h}2\right)}{h}
 \sum\limits_{k\geq0} (w_C \circ \mathcal Z^K_k)(\psi)h^k
.\]
Both hands of the formulas are also well-defined for the long knots\footnote{See Section \ref{S21} for a definition of long knots, in a wider setting.} in $\R^3$, and 
\[ \Delta_\psi(e^h)= \sum\limits_{k\geq0} (w_C \circ \mathcal Z^K_k)(\psi)h^k.\]

The perturbative expansion of Chern-Simons theory $(\mathcal Z_k)_{k\in\mathbb N}$ extends to long knots in rational asymptotic homology\footnote{These spaces are defined in Section \ref{S21}.} $\R^3$,
as in 
\cite{[Lescop],[Lescop2]}.
In this article, we prove the following result (Corollary
\ref{e2}).
\begin{thmnonnum}
For any null-homologous long knot $\psi$ of an asymptotic rational homology $\R^3$,
\[ \Delta_\psi(e^h)= \sum\limits_{k\geq0} (w_C \circ \mathcal Z_k)(\psi)h^k.\]
 
\end{thmnonnum}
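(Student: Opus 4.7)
The statement is a corollary, and the abstract's road-map suggests factoring the argument through the generalized Bott-Cattaneo-Rossi (BCR) invariants $Z^{BCR}_k(\psi)$. My plan has two ingredients. First, the previously established formula (recalled in an earlier section of the paper) which expresses
\[ \Delta_\psi(e^h) = \sum_{k \geq 0} Z^{BCR}_k(\psi)\, h^k \]
for any null-homologous long knot $\psi$ in an asymptotic rational homology $\R^3$. Second, an identification theorem
\[ Z^{BCR}_k(\psi) = (w_C \circ \mathcal{Z}_k)(\psi), \qquad k \geq 0, \]
which is the main new input needed. Substituting the second into the first yields the corollary.

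The nontrivial content is therefore the identification $Z^{BCR}_k = w_C \circ \mathcal{Z}_k$. To prove it I would work diagram by diagram. Writing $\mathcal{Z}_k(\psi) = \sum_{[\Gamma]} I^F(\Gamma, \sigma, \psi)\, [\Gamma]$ in $\mathcal{A}_k$ and applying the linear form $w_C$ gives
\[ (w_C \circ \mathcal{Z}_k)(\psi) = \sum_{[\Gamma]} w_C([\Gamma])\, I^F(\Gamma, \sigma, \psi). \]
The Bar-Natan--Garoufalidis description of $w_C$ makes it very selective: it vanishes on most Jacobi diagrams, and its nonzero values on the remaining combinations are explicitly described. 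The key combinatorial step is to identify, in $\mathcal{A}_k$, the classes on which $w_C$ is nonzero with (combinations of) the BCR graphs $\Gamma_k^{BCR}$ drawn in the preamble. The STU and IHX relations do the work: they rewrite a general Jacobi diagram as a linear combination of diagrams with controlled trivalent structure, and the ones contributing to $w_C$ reduce to BCR-like shapes.

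Once this combinatorial matching is achieved, there remains to compare the integrals themselves. The configuration-space integrals appearing in $\mathcal{Z}_k$ and those defining $Z^{BCR}_k$ are both built from propagators on a compactified $C_2$ of the ambient space together with data from the knot $\psi$, so that for a suitable common choice of propagator they agree term by term on the relevant graphs. The propagator-independence of $\mathcal{Z}_k$, which is part of its definition, is what makes choosing such a common propagator legitimate.

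The main obstacle I anticipate is the careful bookkeeping of signs, orientations, automorphism-group denominators and normalizing factors: the BCR construction and the full perturbative Chern-Simons expansion come from different traditions and use genuinely different conventions. Reconciling them so that the diagram-by-diagram comparison gives strict equality, rather than equality up to a $k$-dependent constant, is the technical heart of the argument. A secondary subtlety is that $w_C$ is defined on the quotient $\mathcal{A}_k$ by AS, IHX and STU, while the integrals $I^F(\Gamma, \sigma, \psi)$ are associated to specific graphs: checking that the relations used in the quotient correspond to genuine integral identities (typically via Stokes-type arguments with vanishing or pairwise-cancelling boundary terms) is what ties the combinatorial and analytic sides together.
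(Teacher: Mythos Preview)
Your overall strategy of routing through the BCR invariants is correct, but both of your intermediate formulas are stated in the wrong form, and this is a genuine gap rather than a cosmetic normalization issue.

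The previously established formula does \emph{not} read $\Delta_\psi(e^h)=\sum_{k\ge 0} Z_{BCR,k}(\psi)h^k$. What is proved in the companion article is the \emph{exponential} relation
\[
\Delta_\psi(e^h)=\exp\Bigl(-\sum_{k\ge 2} Z_{BCR,k}(\psi)\,h^k\Bigr).
\]
Correspondingly, the identification with the Chern--Simons side is not $Z_{BCR,k}=w_C\circ\mathcal Z_k$ but
\[
Z_{BCR,k}(\psi)=-\,(w'_C\circ\mathcal Z_k)(\psi),
\]
where $w'_C$ is the \emph{logarithmic} Conway weight system: it agrees with $w_C$ on connected diagrams with at least one univalent vertex and vanishes on non-trivial products. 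The passage from $w'_C$ back to $w_C$ is the separate identity
\[
\sum_{k\ge 0}(w_C\circ\mathcal Z_k)(\psi)h^k=\exp\Bigl(\sum_{k\ge 1}(w'_C\circ\mathcal Z_k)(\psi)h^k\Bigr),
\]
which comes from the multiplicativity of $w_C$ together with the exponential structure $\mathcal Z=\exp(p^c\mathcal Z)$. Your additive ansatz collapses these two exponentials and cannot be made to work: BCR diagrams are connected by definition, so their integrals can only see the primitive part of $\mathcal Z$, never the full $w_C\circ\mathcal Z_k$ directly.

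For the identification $Z_{BCR,k}=-\,w'_C\circ\mathcal Z_k$ itself, the paper's method is also rather different from your sketch. Rather than simplifying Jacobi diagrams via STU/IHX until BCR shapes emerge, one goes the other way: split the BCR configuration space $C^0_{\Gamma,BCR}(\psi)$ according to the total order $\rho$ induced on the internal vertices, observe that each chamber is literally a Jacobi configuration space $C^0_{\Gamma_\rho,J}(\psi)$, and collect the resulting signed sum into a putative weight system $w_{BCR}$ on Jacobi diagrams. One then checks that $-w_{BCR}$ satisfies the axiomatic characterization of $w'_C$ (value on $\Gamma_k$, vanishing when trivalent vertices exceed the degree, vanishing on non-trivial products, and compatibility with AS/STU). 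The Stokes-type integral identities you mention are not needed here; the comparison is purely combinatorial once the chamber decomposition is made.
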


In Proposition \ref{CSK}, we use the relation of Lescop \cite{LesJKTR} between $\mathcal Z$ and $\mathcal Z^K$ 
to notice that our theorem in terms of the perturbative expansion of the Chern-Simons theory
is equivalent to the formula of Bar-Natan and Garoufalidis in terms of the Kontsevich integral for long knots of $\R^3$.
The proof of our more general
theorem relies on completely different methods, even for long knots of $\R^3$, 
and our theorem holds in the wider setting of null-homologous long knots in asymptotic rational homology $\R^3$.
Our proof uses the direct computations of integrals over configuration spaces of our article \cite{article2}.
For codimension two null-homologous long knots of any 
asymptotic homology\footnote{The definition given for $n=1$ in Section \ref{S21} easily adapts to any $n\geq1$.} $\R^{n+2}$, these computations allowed us to express 
the generalized Bott-Cattaneo-Rossi (BCR for short) invariants $(Z_{BCR,k})_{k\in\mathbb N\setminus\{0,1\}}$, 
which we defined in \cite{article1},
in terms of 
the Reidemeister torsion (or Alexander polynomials).

These generalized BCR invariants generalize invariants defined by 
Bott \cite{[Bott]}, and Cattaneo and Rossi \cite{Cattaneo2005} for (codimension $2$) long knots in odd-dimensional Euclidean spaces 
$\R^{n+2}$ with $n+2\geq5$.
The BCR invariant $Z_{BCR,k}$ is a combination of integrals over configuration spaces
associated with some diagrams with $2k$ vertices of two kinds and $2k$ edges of two kinds, called \emph{BCR diagrams}.
In \cite{article1}, given a parallelized asymptotic homology $\R^{n+2}$ $(\spamb, \tau)$, 
we defined some particular forms on the two-point configuration spaces of $\R^n$ or of $\spamb$, 
called \emph{propagators} of $(\spamb, \tau)$.
The generalized BCR invariant $Z_{BCR,k}$ maps the data of 
a parallelized asymptotic homology $\R^{n+2}$ $(\spamb, \tau)$, a long knot $\psi$, and a family of propagators $F$ of $(\spamb, \tau)$ to a real number. 
In \cite{article1}, we proved that this number depends only of the diffeomorphism class of $(\spamb, \psi)$ when $n\geq3$.
For $n=1$, the definition of \cite{article1} still makes sense, but $Z_{BCR,k}$ might depend on the choice of 
the parallelization or of the propagators, and might not be an invariant.
In this article, we prove the formula
\begin{equation}\label{e3}
Z_{BCR,k}(\psi) = - \left(w'_C \circ \mathcal Z_k\right)(\psi),\end{equation}
for any long knot $\psi$ of a rational asymptotic homology $\R^3$ and
for the weight system $w'_{C}$ defined in Lemma \ref{wc}, where the invariant $Z_{BCR,k}$ can be computed with \emph{any} set of propagators. 
In particular, Formula \ref{e3} and the results of \cite{[Lescop2]} on the perturbative expansion of Chern-Simons theory imply that for any long knot $\psi$ of an asymptotic homology $\R^3$, 
the number $Z_{BCR,k}(\psi)$ does not depend on the choice of the propagators or of the parallelization,
and that it is invariant under ambient diffeomorphism : this is Corollary \ref{bonnedef}.
The proof of Formula \ref{e3} only relies on combinatorics of BCR diagrams and Jacobi unitrivalent diagrams.

The weight system $w'_C$ coincides with the Conway weight system $w_C$ defined by Bar-Natan and 
Garoufalidis in \cite{BNG} on the non-empty connected unitrivalent diagrams.
It vanishes on trivalent diagrams, and on non-trivial products of diagrams.
It satisfies the formula
\begin{equation}\label{e5}\sum\limits_{k\geq0}(w_C\circ\mathcal Z_k)(\psi) h^k= \exp\left(
\sum\limits_{k\geq1} (w'_C\circ \mathcal Z_k)(\psi) h^k \right),\end{equation}
for any long knot of an asymptotic rational homology $\R^3$, as noticed in Theorem \ref{th0}.

For any $n\geq1$, our flexible definition of \cite{article1} for the BCR invariants 
allows us to compute them with arbitrary propagators. In \cite{article2}, we present an
explicit computation based on so-called \emph{admissible propagators}, which
yields
exact formulas for $Z_{BCR,k}(\psi)$ in terms of Alexander polynomials.
For 	a null-homologous long knot of an asymptotic rational homology $\R^3$, these formulas reduce to
\begin{equation}\label{e4} \Delta_\psi(e^h)= \exp\left( -\sum\limits_{k\geq2} Z_{BCR,k}(\psi) h^k \right).\end{equation}
Formulas \ref{e3}, \ref{e5} and \ref{e4} imply the theorem stated in the beginning of this introduction.

In Section \ref{Partie1}, we review the definitions of the two invariants $\mathcal Z_k$ and $Z_{BCR, k}$ of this article, 
and we state the forementioned results in Theorems \ref{th1} (Formula \ref{e3} above) 
and \ref{th-1} (Formula \ref{e4} above). Section \ref{Partie2} is devoted to the proof of Theorem \ref{th1}.

I thank my advisor, Christine Lescop, for her help in the redaction of this article. 
\section{Bott-Cattaneo-Rossi invariants and perturbative expansion of the Chern-Simons theory}\label{Partie1}
\subsection{Long knots in asymptotic homology \texorpdfstring{$\R^3$}{R3}}\label{S21}
Let $\sphamb$ be a smooth oriented compact connected $3$-manifold with the rational homology of $\s^3$. 
Fix a point $\infty$ in $\sphamb$ and a closed ball $B_\infty(M)$ around $\infty$, 
and set $\spamb = \sphamb\setminus\{\infty\}$.
Identify the punctured ball $B_\infty^\circ(M) = B_\infty(M)\setminus\{\infty\}$ of $\spamb$ with the complement $\nbdinf$ of the open unit ball in $\R^3$, 
and let $\bM$ denote the closure of $\spamb\setminus \nbdinf$. 
The manifold $\spamb$ together with the decomposition $\spamb = \bM\cup \nbdinf$ is called an \emph{asymptotic rational homology $\R^3$}.

A \emph{parallelization} of an asymptotic rational homology $\R^3$ is a bundle isomorphism $\tau\colon \spamb \times \R^3\rightarrow T\spamb$ 
that coincides with the canonical trivialization of $T\R^3$ on $\nbdinf\times\R^3$. 
Such a parallelization always exists : see for example \cite[Proposition 5.5]{[Lescop2]}.

A \emph{long knot} in an asymptotic rational homology $\R^3$ is a smooth embedding $\psi \colon \R\hookrightarrow \spamb$ such that, for any $x\in \R$, \begin{itemize}
\item if $x\in[-1,1]$, $\psi(x)\in\bM$,
\item if $x\not\in [-1,1]$, $\psi(x) = (0,0,x)\in\nbdinf\subset \R^3$.
\end{itemize}

In the following, we let $(\spamb, \tau)$ be a fixed parallelized rational asymptotic homology $\R^3$.

\subsection{BCR diagrams}
We recall the definition of BCR diagrams, as introduced in \cite[Section 2.2]{article1}. In all the following, if $k$ is a positive integer, $\und{k}$ denotes the set $\{1,\ldots, k\}$.

\begin{df}\label{Def-BCR}
A \emph{BCR diagram} is a non-empty oriented connected graph $\Gamma$, defined by a set $\sommets$ of vertices, decomposed into $\sommets = \sommetsinternes\sqcup\sommetsexternes$, and a set $\aretes$ of ordered pairs of distinct vertices, decomposed into $\aretes = \aretesinternes\sqcup\aretesexternes$, whose elements are called \emph{edges}\footnote{Note that this implies that our graphs have neither loops nor multiple edges with the same orientation.}, where the elements of $\sommetsinternes$ are called \emph{internal vertices}, those of $\sommetsexternes$ \emph{external vertices}, those of $\aretesinternes$ \emph{internal edges}, and those of $\aretesexternes$ \emph{external edges}, and such that, for any vertex $v$ of $\Gamma$,  one of the five following properties holds: \begin{enumerate}
\item $v$ is external, with two incoming external edges and one outgoing external edge, 
and exactly one of the incoming edges comes from a univalent vertex. 
\item $v$ is internal and trivalent, with one incoming internal edge, one outgoing internal edge, and one incoming external edge, which comes from a univalent vertex.
\item $v$ is internal and univalent, with one outgoing external edge. 
\item $v$ is internal and bivalent, with one incoming external edge and one outgoing internal edge.
\item $v$ is internal and bivalent, with one incoming internal edge and one outgoing external edge.
\end{enumerate}
\end{df}
In the following, internal edges are depicted by solid arrows, external edges by dashed arrows, internal vertices by black dots, and external vertices by white dots, as in Figure \ref{BCR4}, where the five behaviors of Definition \ref{Def-BCR} appear.
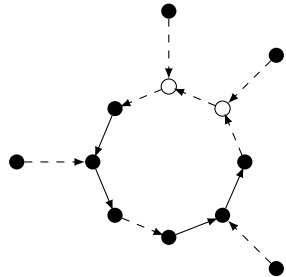
\begin{figure}[H]
\centering
\begin{tikzpicture}[xscale=-1]
\fill (0:1) \crc (45:1) \crc (0:2) \crc (90:2) \crc (135:2) \crc (180:1) \crc (225:1) \crc (225:2) \crc (270:1) \crc (315:1) \crc;
\draw (90:1) \crc (135:1) \crc;
\edgi (45:1) ++(-67.5:.1) -- ++(-67.5:.6); 
\edgi (0:1) ++(-112.5:.1) -- ++(-112.5:.6); 
\edge (-45:1) ++(-157.5:.1) -- ++(-157.5:.6); 
\edgi (-90:1) ++(-201.5:.1) -- ++(-201.5:.6); 
\edgi (-135:1) ++(-246.5:.1) -- ++(-246.5:.6); 
\edge (180:1) ++(67.5:.1) -- ++(67.5:.6); 
\edge (135:1) ++(22.5:.1) -- ++(22.5:.6); 
\edge (90:1) ++(-22.5:.1) -- ++(-22.5:.6); 
\edge (0:1.9) -- (0:1.1); 
\edge (90:1.9) -- (90:1.1); 
\edge (135:1.9) -- (135:1.1); 
\edge (-135:1.9) -- (-135:1.1); 
\end{tikzpicture}
\caption{An example of a BCR diagram of degree 6}\label{BCR4}
\end{figure}
Definition \ref{Def-BCR} implies that any BCR diagram consists of one cycle with some legs attached to it, where \emph{legs} are external edges that come from a (necessarily internal) univalent vertex, and where the graph is a cyclic sequence of pieces as in Figure \ref{fig-BCR2} with as many pieces of the first type than of the second type. 
In particular, a BCR diagram has an even number of vertices, 
and this number is also the number of its edges.

\begin{figure}[H]
\centering
\begin{tikzpicture}
\draw[white] (0,-1) circle (0.1);
\draw [->, dashed, >= latex] (-0.5, 0)-- (-0.1,0);
\fill (0,0) circle (0.1) ;
\draw  (.1, 0)-- (0.5,0);
\end{tikzpicture}\ \ \ \ \ \ \ \ \ \ 
\begin{tikzpicture}
\draw[white] (0,-1) circle (0.1);
\draw [->, >= latex] (-0.5, 0)-- (-0.1,0);
\fill (0,0) circle (0.1);
\draw [dashed] (.1, 0)-- (0.5,0);
\end{tikzpicture}\ \ \ \ \ \ \ \ \ \ 
\begin{tikzpicture}[yscale=-1]
\fill (1,0) circle (0.1) (3,0) circle (0.1);
\fill (1,1) circle (0.1) (3,1) circle (0.1);
\draw [->, dashed,>= latex] (1, 0.9)-- (1,0.1);
\draw [->,dashed,  >= latex] (3, 0.9)-- (3,0.1);
\draw [->, >= latex] (.5, 0)-- (0.9,0);
\draw  (1.1, 0)-- (1.7,0);
\draw [->, >= latex] (2.3, 0)-- (2.9,0);
\draw [dotted] (1.7, 0)-- (2.3,0);
\draw (3.1, 0)-- (3.5,0);
\end{tikzpicture}\ \ \ \ \ \ \ \ \ \ 
\begin{tikzpicture}[yscale=-1]
\draw (1,0) circle (0.1) (3,0) circle (0.1) ;
\fill (1,1) circle (0.1) (3,1) circle (0.1) ;
\draw [->,dashed, >= latex] (1, 0.9)-- (1,0.1);
\draw [->,dashed, >= latex] (3, 0.9)-- (3,0.1);
\draw [->,dashed, >= latex] (.5, 0)-- (0.9,0);
\draw [dashed] (1.1, 0)-- (1.7,0);
\draw [->, dashed,>= latex] (2.3, 0)-- (2.9,0);
\draw [dotted] (1.7, 0)-- (2.3,0);
\draw [dashed] (3.1, 0)-- (3.5,0);
\end{tikzpicture}
\caption{ }\label{fig-BCR2}
\end{figure}
The \emph{degree} of a BCR diagram is the integer $\deg(\Gamma) = \frac12\Card(\sommets)$. 
A \emph{numbering}\footnote{In this article, numberings are valued in $\und{3k}$ rather than in $\und{2k}$ as in \cite{article1} or \cite{article2}, and only the external edges are numbered.} of a degree $k$ BCR diagram is
an injection $\sigma\colon\aretesexternes\hookrightarrow \und{3k}$.

\subsection{Jacobi unitrivalent diagrams}

In this section, we recall the definition of unitrivalent diagrams, widely used in the theory of Vassiliev invariants. 

\begin{df}
A \emph{Jacobi diagram} is a graph $\Gamma$,
given by a set $\sommets$ of vertices, decomposed into $\sommets = \sommetsinternes\sqcup
\sommetsexternes$, a set $\aretes$ of unordered pairs of distinct vertices called \emph{edges}, such that the vertices of $\sommetsexternes$ are trivalent, 
the vertices of $\sommetsinternes$ are univalent, and 
the set $\sommetsinternes$ of univalent vertices is totally ordered.

The \emph{degree} $\deg(\Gamma)$ of such a diagram $\Gamma$ is half its number of vertices. 
A \emph{numbering} of a degree $k$ Jacobi diagram 
is an injection $j\colon \aretes \hookrightarrow \und{3k}$.

An \emph{orientation} of a trivalent vertex $v$ is
the choice of a cyclic order on the three half-edges adjacent to $v$.
A \emph{vertex-orientation} of a Jacobi diagram $\Gamma$ is 
the choice of an orientation of any trivalent vertex. 
A \emph{vertex-oriented Jacobi diagram} is a Jacobi diagram together with a vertex orientation.

An \emph{edge-orientation} of a Jacobi diagram is the choice of an orientation for each edge. An \emph{edge-oriented Jacobi diagram} is a Jacobi diagram with a given edge-orientation.
A \emph{bioriented Jacobi diagram} is a Jacobi diagram with both a vertex-orientation and an edge-orientation.
\end{df}
In the following, Jacobi diagrams will be depicted 
by a planar immersion of $\Gamma\cup \R$, 
where $\R$ is a plain vertical line, on which the univalent vertices of $\Gamma$ lie, 
and the edges of $\Gamma$ are dashed lines. 
The order on the univalent vertices is given by the vertical direction on the plain vertical line $\R$ from bottom to top.
When dealing with vertex-oriented Jacobi diagrams,
the vertex-orientation will be given by the counterclockwise order in the plane.
An example of Jacobi diagram is given in Figure \ref{figure-Jacobi-ex}.

\begin{figure}[H]
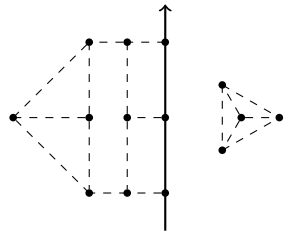

\centering
\figJAC
\caption{An example of a degree $7$ Jacobi diagram.}\label{figure-Jacobi-ex}
\end{figure}

\begin{df}\label{ASSTU}
Let $k$ be a nonnegative integer.
The space $\diagrammes$ is the real vector space spanned by 
the degree $k$ vertex-oriented Jacobi diagrams 
up to the equivalence relation spanned by the three following rules:
\begin{itemize}
\item \emph{AS relation:} if $\overline \Gamma$ is obtained from $\Gamma$ by reversing the orientation of one trivalent vertex,
$[\overline\Gamma]= -[\Gamma]$,
\item \emph{IHX relation:} if $\Gamma_1$, $\Gamma_2$ and $\Gamma_3$ can be represented by planar immersions that coincide outside a disk and are as in the second row of Figure \ref{relations} inside this disk, $[\Gamma_1]+[\Gamma_2]+ [\Gamma_3]=0$. 
\item \emph{STU relation:} if $\Gamma$, $\Gamma_1$ and $\Gamma_2$ can be represented by planar immersions that coincide outside a disk and are as in the third row of Figure \ref{relations} inside this disk, $[\Gamma]= [\Gamma_1]-[\Gamma_2]$.
\end{itemize}
We also denote the vector subspace of $\diagrammes$ generated 
by the classes of diagrams such that any connected component contains a univalent vertex by $\diagrammesc$.
Set 
$\mathcal A=\prod\limits_{k\geq0}\diagrammes$ and $\mathcal{\check A}=\prod\limits_{k\geq0}\diagrammesc$.
Note that $\mathcal A_0 = \mathcal{\check A}_0 = \R.[\emptyset]$, where $[\emptyset]$
is the class of the empty diagram.
\end{df}
\begin{figure}[h]
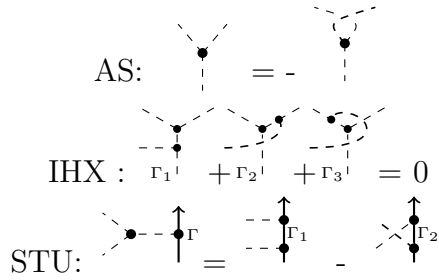

\centering
AS: \figureas\\
IHX : \figureihx \\
STU:\figurestu
\caption{The three relations of Definition \ref{ASSTU}.}\label{relations}
\end{figure}
\begin{df}\label{produit}
The \emph{product} of two Jacobi diagrams $\Gamma_1$ and $\Gamma_2$ is the Jacobi diagram $\Gamma$ such that  \begin{itemize}
\item the set of univalent vertices of $\Gamma$ is $V_i(\Gamma) = V_i(\Gamma_1)\sqcup V_i(\Gamma_2)$,
\item the set of trivalent vertices of $\Gamma$ is $V_e(\Gamma) = V_e(\Gamma_1)\sqcup V_e(\Gamma_2)$,
\item the set of edges of $\Gamma$ is $E(\Gamma) = E(\Gamma_1)\sqcup E(\Gamma_2)$,
\item the order on $V_i(\Gamma)$ is the unique order compatible with the injections $\big(V_i(\Gamma_j)\hookrightarrow V_i(\Gamma)\big)_{j=1,2}$ such that any element of $V_i(\Gamma_1)$ is before any element of $V_i(\Gamma_2)$. 
\end{itemize}
Note that the data of vertex-orientations (resp. edge-orientations) for $\Gamma_1$ and $\Gamma_2$ induce a natural
vertex-orientation (resp. edge-orientation) for their product $\Gamma$.
\end{df}

The product of Jacobi diagrams is compatible with the relations of Definition \ref{ASSTU}.
Bar-Natan proved in \cite[Theorem 7]{Bar-NatanTop} that the induced graded algebra structure on $\prod\limits_{k\in\mathbb N}\mathcal A_k$ is commutative.\footnote{Bar-Natan's theorem proves that $\prod\limits_{k\in\mathbb N}\mathcal A_k$ is a commutative and cocommutative Hopf algebra, for a given coproduct, but we do not use the coproduct in this article.}
This allows us to define the associated exponential map $\exp \colon \prod\limits_{k\in\mathbb N} \mathcal A_k \rightarrow \prod\limits_{k\in\mathbb N} \mathcal A_k$.

\subsection{The Conway weight system}

Let us first recall the definition of the Conway weight system of \cite[Section 3.1]{BNG}. 

\begin{df}
Let $\Gamma$ be a Jacobi diagram with only univalent vertices (such a diagram is called a \emph{chord diagram}). 
Use the edges of $\Gamma$ to do the surgeries on the line $\R$ as in Figure \ref{figwcBN}. 
The obtained manifold is the disjoint union of one line and $c$ circles. 
The \emph{Conway weight system} $w_C$ is defined as \[w_C(\Gamma)=
\begin{cases} 1 & \text { if $c=0$,} \\ 0 & \text{otherwise.}\end{cases}\]
\end{df}

\begin{figure}[H]
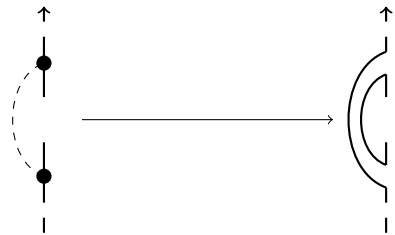

\centering
\wczero\\
\caption{Surgeries involved in the definition of $w_C$}\label{figwcBN}
\end{figure}

\begin{figure}[H]
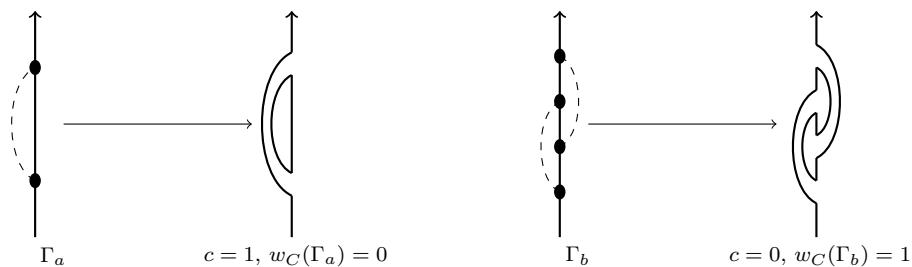

\centering
\wcun \ \ \ \ \ \ \ \ \ \ \ \wcdeux
\caption{Computation of $w_C(\Gamma)$ for two chord diagrams.}
\end{figure}
In \cite{BNG}, Bar-Natan and Garoufalidis proved that this definition determines a linear form $w_C\colon \bigoplus\limits_{k\in\mathbb N}\diagrammesc\rightarrow \R$. 
Set $w_C(\Gamma)=0$ for diagrams with at least one component without univalent vertices.
The form $w_C$ naturally extends to $w_C\colon \bigoplus\limits_{k\in\mathbb N}\diagrammes\rightarrow \R$
by sending diagrams with a non-empty trivalent\footnote{A \emph{trivalent graph} is a graph with only trivalent vertices.} connected
component to zero.
Chmutov \cite[p. 9]{[Chmutov]} proved that $w_C$ is determined by the following properties.

\begin{lm}\label{wc0}
The Conway weight system $w_C$ is the unique linear form $w_C\colon \bigoplus\limits_{k\in\mathbb N}\diagrammes\rightarrow \R$ such that 
\begin{itemize}
\item $w_C$ vanishes on $\mathcal A_1$ and maps $[\emptyset]$ to $1$,
\item for any integer $k\geq2$, if $\Gamma_k$ denotes the diagram depicted in Figure \ref{figgk}, $w_{C}([\Gamma_k])=-1-(-1)^k$,
\item if the number of trivalent vertices of $\Gamma$ is greater than its degree, then  $w_{C}([\Gamma])=0$,
\item if $\Gamma$ is the product of two diagrams $\Gamma_1$ and $\Gamma_2$, then
$w_C([\Gamma]) = w_C([\Gamma_1])w_C([\Gamma_2])$.
\end{itemize}
In particular, $w_C$ vanishes on odd-degree diagrams. 
\end{lm}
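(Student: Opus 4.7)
My plan is to split the proof into existence (the Bar-Natan--Garoufalidis Conway weight system satisfies the four listed properties) and uniqueness (these properties determine the form completely). For existence, property 1 is immediate from the surgery definition: the empty diagram produces one line with $c=0$ and hence $w_C=1$, while the single chord in $\mathcal A_1$ yields $c=1$ and so $w_C=0$. Property 4 follows because the surgeries on a product diagram act on disjoint portions of the line $\R$, so $c(\Gamma_1 \cdot \Gamma_2) = c(\Gamma_1) + c(\Gamma_2)$, giving $w_C(\Gamma_1\cdot\Gamma_2) = 1$ if and only if both factors satisfy $c_i = 0$. Property 3 follows from the STU-extension of $w_C$: each STU reduction near a univalent vertex trades one trivalent vertex for two chord-like fragments with the same number of univalent vertices, and iterating produces chord diagrams whose connected components all inherit a univalent vertex exactly when the original diagram had at most as many trivalent as univalent vertices; otherwise some resulting component is univalent-free and is sent to zero by convention. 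Property 2 requires a direct computation on $\Gamma_k$: fully STU-resolving it into chord diagrams and counting the parity of circles produced by the surgeries yields $-1-(-1)^k$, namely $-2$ for even $k$ and $0$ for odd $k$.

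For uniqueness, I argue by induction on the degree, combined with an inner induction on the number of trivalent vertices. Property 4 restricts the problem to primitive (connected) diagrams, and property 3 annihilates any connected diagram with strictly more trivalent than univalent vertices. For a connected degree-$k$ diagram $\Gamma$ with $t\le k$ trivalent vertices, STU relations near its univalent vertices express $[\Gamma]$ as a combination of diagrams with strictly fewer trivalent vertices, allowing me to reduce to chord diagrams. On chord diagrams of degree $k$, the 4-term relation implied by STU and AS reduces the spanning set further, to linear combinations of the wheels $\Gamma_j$ and their products; the latter are evaluated by combining properties 2 and 4, pinning down $w_C$ uniquely on all of $\bigoplus_{k\in\mathbb N}\mathcal A_k$. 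The final claim that $w_C$ vanishes on odd-degree diagrams is then a corollary: odd-degree primitives reduce after this process to combinations of odd wheels $\Gamma_{2j+1}$, on which $w_C$ vanishes by property 2, and odd-degree non-primitives are products whose factors include an odd-degree primitive.

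The main obstacle is the last step of the uniqueness argument, namely showing that the $\Gamma_j$'s and their products really span the primitive part of $\mathcal A_k$ modulo the subspace on which properties 3 and 4 force vanishing. This is the substantive combinatorial content of Chmutov's characterization; I would carry it out by introducing a filtration on connected diagrams by the ``trivalent depth'' (for instance, the length of the longest chain of trivalent vertices) and exhibiting, for each primitive diagram sitting strictly above the wheels in this filtration, an explicit sequence of IHX moves reducing it to a wheel modulo diagrams of strictly smaller trivalent depth or strictly larger trivalent count. Once this spanning statement is established, the linearity of $w_C$ together with properties 1--4 forces a unique value on every basis element, completing the uniqueness proof.
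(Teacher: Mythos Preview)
The paper does not prove this lemma: it simply records that the existence of $w_C$ as a linear form on $\bigoplus_k\diagrammesc$ is due to Bar-Natan and Garoufalidis, extends it to $\bigoplus_k\diagrammes$ by declaring it zero on diagrams with a trivalent connected component, and cites Chmutov for the characterization by the four listed properties. So there is no argument in the paper to compare against, only external references.

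That said, your outline has a structural problem in the uniqueness half. You first use STU to \emph{decrease} the number of trivalent vertices until you reach chord diagrams, and then assert that the 4-term relation reduces chord diagrams to ``wheels $\Gamma_j$ and their products''. But the $\Gamma_k$ of the lemma are not chord diagrams: $\Gamma_k$ has exactly $k$ trivalent vertices and $k$ univalent vertices. Once you are in the chord-diagram subspace you cannot land on $\Gamma_k$ by 4T alone, and in any case the primitive part of the chord-diagram algebra in degree $k$ is generally of dimension much larger than one, so the stated reduction is false as written. The workable direction is the opposite one: on a connected diagram with $t<k$ trivalent vertices, pick two consecutive univalent vertices on the line and use STU \emph{in reverse} to trade their commutator for a diagram with $t+1$ trivalent vertices, the remaining term being a product (hence handled by multiplicativity and induction on degree). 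Iterating pushes every connected class into $\nonprimitifs$, plus diagrams with $t\geq k$; those with $t>k$ vanish by property~3, and those with $t=k$ have trivalent part a connected unicyclic graph, which one then reduces to $\Gamma_k$ via IHX and STU. This last reduction is exactly the ``substantive combinatorial content'' you flag, but your proposed filtration by trivalent depth is not sharp enough to carry it out; Chmutov's argument is more specific. Your existence sketch for property~3 is also too loose: the statement is not that ``some resulting component is univalent-free'', but rather that any connected diagram with more trivalent than univalent vertices already lies in the span of diagrams with a purely trivalent component, which again needs the reverse-STU mechanism above.
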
\begin{figure}[H]
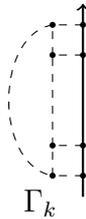
\centering
\graphek
\caption{The degree $k$ Jacobi diagram $\Gamma_k$}\label{figgk}
\end{figure}

The following result directly follows from \cite[Corollary 6.36]{[Lescop2]}.
\begin{lm}\label{cor27}
For any $k\geq 1$, let \begin{itemize}
\item $\primitifs$ denote the subspace of $\diagrammesc$ spanned by the classes of 
connected diagrams with at least one univalent vertex,
\item $\nonprimitifs$ denote the subspace of $\diagrammesc$ spanned by the classes of non-trivial products, which are
products of two non-empty diagrams,
\item $\diagrammess$ denote the subspace of $\diagrammes$ spanned by 
the classes of degree $k$ diagrams with at least one 
trivalent connected component,
\end{itemize}
and set $\mathcal P_0= \mathcal T_0= \{0\}$ and $\mathcal N_0=\R.[\emptyset]$.
For any $k\geq 0$, the space $\diagrammes$ splits into $\diagrammes = \primitifs \oplus \nonprimitifs\oplus\diagrammess$.
This yields a natural projection $p^c\colon\bigoplus\limits_{k\geq0}\diagrammes \rightarrow \bigoplus\limits_{k\geq0}\primitifs$.
\end{lm}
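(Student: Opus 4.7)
The plan is to decompose in two stages: first $\diagrammes = \diagrammesc \oplus \diagrammess$, and then $\diagrammesc = \primitifs \oplus \nonprimitifs$ via \cite[Corollary 6.36]{[Lescop2]}.

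First, I would establish the decomposition $\diagrammes = \diagrammesc \oplus \diagrammess$. Any Jacobi diagram falls into exactly one of two disjoint classes: those whose every connected component contains a univalent vertex (spanning $\diagrammesc$), and those possessing at least one purely trivalent connected component (spanning $\diagrammess$). The three local moves AS, IHX and STU each act within a fixed connected component---in particular, an STU move preserves the connected component of the local disk, since its edges attached to the disk boundary stay within the same component before and after, and the vertices involved are always univalent or directly connected to a univalent vertex---so the partition by ``presence of a purely trivalent component'' descends to the quotient, giving the direct sum decomposition.

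Second, I would apply \cite[Corollary 6.36]{[Lescop2]} to decompose the ``univalent'' part as $\diagrammesc = \primitifs \oplus \nonprimitifs$ in each degree $k$. That corollary is a Milnor--Moore-type structure theorem: $\diagrammesc$ is the symmetric algebra on its primitive elements, which are spanned by the classes of connected diagrams with at least one univalent vertex. Hence each element admits a unique decomposition as a sum of a class of a connected diagram (in $\primitifs$) and a sum of classes of products of at least two non-empty such diagrams (in $\nonprimitifs$). Commutativity of the multiplication, needed for Milnor--Moore, is the result of Bar-Natan cited just before the lemma.

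Combining the two steps gives $\diagrammes = \primitifs \oplus \nonprimitifs \oplus \diagrammess$ for every $k \geq 1$. For $k = 0$, $\mathcal{A}_0 = \R.[\emptyset] = \mathcal{N}_0$ by convention, and $\mathcal{P}_0 = \mathcal{T}_0 = 0$. The natural projection $p^c$ onto $\bigoplus_{k\geq 0} \primitifs$ is then obtained by projecting onto the first summand.

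The main obstacle is making sure that the Milnor--Moore decomposition of $\diagrammesc$ is compatible with the STU, IHX, and AS relations, and in particular that the Hopf-coproduct given by splitting a diagram along subsets of its connected components is well-defined on the quotient space. This is the non-trivial content of \cite[Corollary 6.36]{[Lescop2]}, which is what allows the abstract symmetric algebra structure to match the concrete decomposition into primitives $\primitifs$ (single connected components) and non-trivial products $\nonprimitifs$.
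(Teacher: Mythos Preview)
Your proposal is correct and takes essentially the same approach as the paper, which simply states that the result ``directly follows from \cite[Corollary 6.36]{[Lescop2]}'' without further argument. Your two-stage decomposition makes explicit what the paper leaves implicit; one minor imprecision is that an STU move can actually split one connected component into two (the two outgoing half-edges of the trivalent vertex in $\Gamma$ may become disconnected in $\Gamma_1,\Gamma_2$), but since each resulting piece acquires a univalent vertex, your conclusion that the partition by ``presence of a purely trivalent component'' descends to the quotient remains valid.
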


Define the \emph{logarithmic Conway weight system} $w'_C$ as $w'_C =  w_C\circ p^c$. It is 
characterized as follows.

\begin{lm}\label{wc}
The map $w'_C$ is the unique linear form $w'_{C}\colon \bigoplus\limits_{k\in\mathbb N}\diagrammes \rightarrow \R$ such that \begin{itemize}
\item $w'_C$ vanishes on $\mathcal A_0\oplus\mathcal A_1$,
\item for any $k\geq2$, $w'_{C}([\Gamma_k])=-1-(-1)^k$,
\item if the number of trivalent vertices of $\Gamma$ is greater than its degree, then $w'_{C}([\Gamma])=0$,
\item if $\Gamma$ is a non-trivial product of Jacobi diagrams, then $w'_{C}([\Gamma])=0$.
\end{itemize}
\end{lm}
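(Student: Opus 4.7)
The plan is to establish Lemma \ref{wc} by proving existence and uniqueness separately. For existence, I would verify directly that $w'_C := w_C \circ p^c$ satisfies the four listed properties, using Lemma \ref{wc0} and the decomposition of Lemma \ref{cor27}. The first property follows because $p^c$ kills $\mathcal A_0 = \mathcal N_0$ (as $\mathcal P_0 = 0$), and because $\mathcal A_1 = \mathcal P_1$ (every degree 1 diagram is a single chord), on which $p^c$ is the identity while $w_C$ already vanishes by Lemma \ref{wc0}. The second property follows since $\Gamma_k$ is connected with univalent vertices, hence primitive, so that $p^c[\Gamma_k] = [\Gamma_k]$ and $w_C$ supplies the value $-1-(-1)^k$. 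The third and fourth properties follow from the observation that, under the decomposition of Lemma \ref{cor27}, non-trivial products and diagrams with trivalent components lie in the kernel of $p^c$, while for a $\Gamma$ which is connected with univalent vertex and $t > \deg\Gamma$, Lemma \ref{wc0} already ensures $w_C[\Gamma] = 0$.

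For uniqueness, I would let $w''_C$ satisfy the four properties and set $v := w''_C - w'_C$, which then satisfies the same four properties but with $v([\Gamma_k]) = 0$. It suffices to show $v = 0$ on each of the three summands of Lemma \ref{cor27}. On $\nonprimitifs$, property 4 gives $v = 0$ directly. On $\diagrammess$, the key remark is that a connected purely-trivalent diagram of degree $k$ has $t = 2k > k = \deg$, so property 3 applies; a disconnected diagram with a trivalent component is a non-trivial product, so property 4 applies. Thus $v$ vanishes outside $\primitifs$.

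The crucial and most delicate step is to show $v = 0$ on $\primitifs$. By construction $v$ already vanishes on $\mathcal P_1 \subseteq \mathcal A_1$, on each wheel $[\Gamma_k]$ for $k\geq 2$, and on primitive classes with $t > u$. The hard part will therefore be to invoke the combinatorial spanning result stating that $\primitifs$, modulo the subspace generated by primitive classes with more trivalent than univalent vertices, is spanned by the wheels $\{[\Gamma_k] : k\geq 2\}$. This is exactly the spanning fact underlying Chmutov's proof of Lemma \ref{wc0} in \cite{[Chmutov]}, obtained by iterated IHX and STU reductions; once granted, it immediately forces $v = 0$. The plan is thus to reuse Chmutov's reductions unchanged, exploiting the fact that the only difference between the characterizations of $w_C$ and $w'_C$ is multiplicativity versus vanishing on non-trivial products, and both conditions have the same effect of reducing the problem to computing values on primitive connected diagrams.
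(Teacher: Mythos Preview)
Your proposal is correct and follows essentially the same approach as the paper's proof, which is extremely terse: the paper simply says that by Lemma~\ref{wc0}, $w'_C$ satisfies the stated properties, and that these properties characterize it on the summand $\bigoplus_k \primitifs$. Your argument spells out in detail what the paper leaves implicit---namely the verification that properties~3 and~4 force vanishing on $\diagrammess\oplus\nonprimitifs$, and the appeal to Chmutov's spanning result (the content behind Lemma~\ref{wc0}) to handle $\primitifs$---so your write-up is a faithful and more explicit expansion of the paper's one-line proof.
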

\begin{proof}

By Lemma \ref{wc0}, $w'_C$ satisfies these
properties, and they characterize it on the summand $\bigoplus\limits_{k\in\mathbb N} \primitifs$ of
$\bigoplus\limits_{k\in\mathbb N}\diagrammes$.\qedhere
\end{proof}

\subsection{Configuration spaces}
\subsubsection{For BCR diagrams}\label{s241}

\begin{df}
Let $\psi\colon \R\hookrightarrow \spamb$ be a long knot. Let $\Gamma$ be a BCR diagram.
The open configuration space associated to $\Gamma$ and $\psi$ is 
\[C_{\Gamma,BCR}^0(\psi) = \{ c \colon \sommets \hookrightarrow \spamb \mid\text{There exists } c_i\colon\sommetsinternes\hookrightarrow\R, c_{|\sommetsinternes}= \psi\circ c_i\}.\]
\end{df}
This space is oriented as follows. 
Let $\Gamma$ be a BCR diagram. 
For any internal vertex $v$, let $\d t_v$ denote the coordinate $c_i(v)$.
For any external vertex $v$, let $(\d X_v^{i})_{i\in\{1,2,3\}}$ denote the coordinates of $c(v)$ in an oriented chart of $\spamb$.
Split any external edge $e$ into two half-edges $e_-$ (the tail) and $e_+$ (the head). 
For any external half-edge $e_\pm$, 
define a form $\Omega_{e_\pm}$ as follows:
 \begin{itemize}
\item for the head $e_+$ of an edge that is not a leg, going to an external vertex $v$, $\Omega_{e_+} = \d X_v^1$,
\item for the head $e_+$ of a leg going to an external vertex $v$, $\Omega_{e_+} = \d X_v^2$,
\item for the tail $e_-$ of an edge coming from an external vertex $v$, $\Omega_{e_-} = \d X_v^3$,
\item for any external half-edge $e_\pm$ adjacent to an internal vertex $v$, $\Omega_{e_\pm} = \d t_v$,
\end{itemize}
and set $\epsilon(\Gamma) = (-1)^{\Card(\aretesexternes)+N_T(\Gamma)}$ where $N_T(\Gamma)$ is the number of trivalent vertices of $\Gamma$. 
With these notations, the manifold $C_{\Gamma,BCR}^0(\psi)$ is oriented by the form $\Omega(\Gamma) = \epsilon(\Gamma) \bigwedge\limits_{e\in\aretesexternes}\left(\Omega_{e_-}\wedge\Omega_{e_+}\right)$.

\subsubsection{For Jacobi diagrams}
\begin{df}
Let $\psi\colon \R\hookrightarrow \spamb$ be a long knot. Let $\Gamma$ be a Jacobi diagram.
The open configuration space associated to $\Gamma$ and $\psi$ is 
\[C_{\Gamma,J}^0(\psi) = \left\{ c \colon \sommets \hookrightarrow \spamb ~ \Bigr| ~ \begin{array}{ll}
\text{There exists an increasing map } c_i\colon\univalents\hookrightarrow\R\\ \text{such that } c_{|\sommetsinternes}= \psi\circ c_i\end{array}
\right\}.\]
\end{df}

Let us fix a bioriented Jacobi diagram $\Gamma$
and orient the manifold $C_{\Gamma,J}^0(\psi)$. 
We use the edge-orientation to split any edge $e$ of $\Gamma$ in two half-edges $e_-$ and $e_+$ as above and, 
for each trivalent vertex, 
we fix an order on 
the set of its three adjacent half-edges, 
compatible with the cyclic order given by the vertex-orientation of 
the diagram.
For any half-edge $e_\pm$ of $\Gamma$, define forms $\Omega_{e_\pm}$ such that,
\begin{itemize}
\item if $e_\pm$ is adjacent to a univalent vertex $v$, $\Omega_{e_\pm} = \d t_v$,
\item if $e_\pm$ is the $i$-th half-edge adjacent to a trivalent vertex $v$, $\Omega_{e_\pm} = \d X_v^i$,
\end{itemize}
and set $\Omega(\Gamma) = \bigwedge\limits_{e\in E(\Gamma)} \left(\Omega_{e_-}\wedge\Omega_{e_+}\right)$.
 Note that this form does not depend of the "compatible with the vertex-orientation" choice of the orders of the half-edges around each trivalent vertex. 
Note that the orientation form $\Omega(\Gamma)$ is multiplied by $-1$
if we change the orientation of one edge or of one trivalent vertex.

\subsection{Propagators and configuration space integrals}

Here, we review the definition of $\configM$ in \cite[Section 2.2]{[Lescop]}.
Let $\configM$ denote the space obtained from $M^2$ after the differential blow-up \footnote{For example, see \cite[Section 2.2]{[Lescop]} for more details on these blow-ups.} of $\{(\infty,\infty)\}$ and 
of the closures of $\spamb\times\{\infty\}$, $\{\infty\}\times\spamb$ and $\Delta = \{(x,x)\mid
x\in\spamb\}$ in the obtained manifold. 
The manifold with boundary and corners $\configM$ is a compactification of $C_2^0(\spamb)=(\spamb)^2\setminus\Delta$.
Let $G_\tau\colon \partial\configM\rightarrow \s^2$ denote the Gauss map as defined 
in\footnote{It is called $p_\tau$ in these sources.} \cite[Proposition 2.3]{[Lescop]} or \cite[Proposition 3.7]{[Lescop2]}.
The map $G_\tau$ is an analogue of the map $G\colon C_2(\R^3)\rightarrow \s^2$ that extends
$\big((x,y)\in C_2^0(\R^3) \mapsto \frac{y-x}{||y-x||}\in\s^2\big)$, 
but $G_\tau$ is only defined on the boundary of the two-point configuration space.
It depends on the parallelization $\tau$ of Section \ref{S21}. 

A form $\Prop$ on $\configM$ is \emph{antisymmetric} if $T^*(\Prop)=-\Prop$, 
where $T\colon\configM\rightarrow\configM$ is the smooth extension of $(x,y)\in C_2^0(\spamb)\mapsto (y,x)\in\configM$.
Let us recall the definition of the external propagators of \cite[Section 2.5]{article1}. 
\footnote{Here, we require antisymmetric propagators in order to get a simpler formula in Theorem \ref{th0}.}
\begin{df}\label{prop}
A \emph{propagator} of $(\spamb,\tau)$ is a closed antisymmetric $2$-form $\Prop$ on $\configM$ such that
there exists a closed $2$-form $\delta_\Prop$ on $\s^2$
with total volume $1$ such that 
$\Prop_{|\partial\configM} = {G_\tau}^*(\delta_\Prop)$.

A $k$-family of propagators of $(\spamb,  \tau)$ is the data of $3k$ propagators $(\Prop_i)_{i\in\und{3k}}$ of $(\spamb, \tau)$.
\end{df}
In $\R^3$ with its canonical parallelization, 
the pull-back of the $SO(3)$-invariant form on $\s^2$ with total volume $1$ under the Gauss map $G$ is a propagator. 

\begin{df}
For any edge-oriented Jacobi (resp. BCR) diagram $\Gamma$, define the following maps on the configuration space $C_{\Gamma, J}^0(\psi)$ (resp. $C_{\Gamma, BCR}^0(\psi)$):
\begin{itemize}
\item for any edge (resp. external edge) $e=(v,w)$ of $\Gamma$, $p_e$ denotes the map 
$\big(c\mapsto (c(v), c(w))\big)$ from the configuration space to $\configM$,
\item for any pair $(v,w)$ of distinct univalent (resp. internal) vertices, $\epsilon_{v,w}$ denotes 
the map $\big( c\mapsto \mathrm{sign}(c_i(w)-c_i(v))\big)$
from the configuration space to $\{-1,1\}$.
\end{itemize}
Note that the map $p_e$ depends on the chosen edge-orientation of 
a Jacobi diagram.
\end{df}

For any degree $k$ numbered BCR diagram $(\Gamma,\sigma)$
and any $k$-family $F=(\Prop_i)_{i\in\und{3k}}$ of propagators of $(\spamb, \psi, \tau)$,
define a form $\omega^F(\Gamma,\sigma,\psi)$ on $C^0_{\Gamma,BCR}(\psi)$
as
\[\omega^F(\Gamma,\sigma,\psi) = \frac{(-1)^{N_i^-(\Gamma,\cdot)}}{2^{\Card(\aretesinternes)}}\bigwedge\limits_{e\in\aretesexternes} {p_e}^*(\Prop_{\sigma(e)}),\] where for any $c\in \confignoeud$, $N_i^-(\Gamma,c)$ is the number of internal edges from a vertex $v$ to a vertex $w$ such that
$\epsilon_{v,w}(c)<0$. 
Note that the form $\omega^F(\Gamma,\sigma,\psi)$ is the analogue of the form of the same name defined in \cite[Section 2.6]{article1}, 
when all the internal propagators of this article are equal to the $0$-form $\alpha$ on $C_2(\R)$ that extends $\left((x,y)\in C_2^0(\R)\mapsto  \frac{\sgn(y-x)}2\in \R\right)$.

For any degree $k$ numbered edge-oriented Jacobi diagram $(\Gamma, j)$ 
and any $k$-family $F=(\Prop_i)_{i\in\und{3k}}$ of propagators of $(\spamb, \tau)$,
define a form $\omega^F(\Gamma,j,\psi)$
as
\[\omega^F(\Gamma,j,\psi) = \bigwedge\limits_{e\in\aretes} {p_e}^*(\Prop_{j(e)}).\]
Now, for any numbered BCR diagram, set \[
I^F(\Gamma,\sigma,\psi) = \int_{C_{\Gamma,BCR}^0(\psi)} \omega^F(\Gamma,\sigma, \psi),\]
and for any numbered bioriented Jacobi diagram, set \[
I^F(\Gamma,j,\psi) = \int_{C_{\Gamma,J}^0(\psi)} \omega^F(\Gamma,j, \psi).\]
These two integrals converge because of the existence of compactifications of 
the configuration spaces, to which the above forms extend.\footnote{Such compactifications were first introduced by Axelrod and Singer in \cite[Section 5]{AS} for Jacobi diagrams and by Rossi in his thesis \cite[Section 2.5]{[Rossi]} for BCR diagrams. See also \cite[Section 8]{[Lescop2]} for an extensive description of these compactifications for Jacobi diagrams.}

\subsection{The perturbative invariant \texorpdfstring{$\mathcal Z$}{Z}}

Let $\graphesJn$ denote the set of degree $k$ (unoriented) numbered Jacobi diagrams.
Let $(\Gamma, j)\in \graphesJn$, and let $F= (\omega_i)_{i\in\und{3k}}$
be a family of propagators of $(\spamb, \tau)$.
Orient both the edges and the trivalent vertices of $\Gamma$ arbitrarily.
Since the propagators are antisymmetric,
the orientation of $C_{\Gamma,J}^0(\psi)$ and the sign of $\omega^F(\Gamma,j, \psi)$
both change when changing the orientation of one edge of $\Gamma$, and $w'_C(\Gamma)$ remains unchanged. 
When the orientation of one trivalent vertex changes,
the sign of $w'_C([\Gamma])$ and the orientation of $C_{\Gamma,  J}^0(\psi)$ both change and $\omega^F(\Gamma,j,\psi)$ remains unchanged.
Therefore, $I^F(\Gamma,j, \psi)w'_C([\Gamma])$ depends neither of the vertex-orientation, nor of the edge-orientation, and is thus well-defined for $(\Gamma,j)\in\graphesJn$.

Let $j_\emptyset$ denote the only numbering of the empty diagram, 
let $F_\emptyset$ denote the empty family of propagators, and set  $I^{F_\emptyset}(\emptyset,j_\emptyset, \psi)= 1$. 

\begin{theo}\label{th0}
Let $(\spamb, \tau)$ be a parallelized asymptotic rational homology $\R^3$. Fix a long knot $\psi$ of $\spamb$. 
Fix an integer $k\geq0$, and a $k$-family $F=(\Prop_i)_{i\in\und{3k}}$ of propagators of $(\spamb, \tau)$, and set \[\left(w_C\circ \mathcal Z^F_{k}\right)(\psi, \tau) =  \sum\limits_{(\Gamma,j)\in\graphesJn} \frac{(3k-\Card(\aretes))!}{(3k)!} I^F(\Gamma,j, \psi)w_C([\Gamma])
,\]
\[\left(w'_C\circ \mathcal Z^F_{k}\right)(\psi, \tau) =  \sum\limits_{(\Gamma,j)\in\graphesJn} \frac{(3k-\Card(\aretes))!}{(3k)!} I^F(\Gamma,j, \psi)w'_C([\Gamma])
.\]
\begin{itemize}
\item The quantity $w'_C\circ \mathcal Z_{k}^F$ does not depend on the choice of the $k$-family $F$ of propagators of $(\spamb,\tau)$.
\item The quantity $w'_C\circ \mathcal Z_{k}=w'_C\circ \mathcal Z_{k}^F$ does not depend on the choice of the parallelization $\tau$ of $\spamb$.
\item The quantity $w'_C\circ \mathcal Z_{k}$ only depends on the diffeomorphism class of $(\spamb,\psi)$.
\item We have the following equality in $\R[[h]]$: $$\sum\limits_{k\geq0}(w_C\circ\mathcal Z_k)(\psi) h^k= \exp\left(
\sum\limits_{k>0} (w'_C\circ \mathcal Z_k)(\psi) h^k \right).$$
\end{itemize}
\end{theo}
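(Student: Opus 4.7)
The plan is to view the two sums in the theorem as $w_C$ and $w'_C$ applied to the $\mathcal A_k$-valued degree-$k$ perturbative Chern-Simons invariant
$$\mathcal Z_k^F(\psi,\tau) := \sum_{(\Gamma,j)\in\graphesJn} \frac{(3k-\Card(\aretes))!}{(3k)!}\, I^F(\Gamma,j,\psi)\, [\Gamma] \in \mathcal A_k.$$
I will deduce the three invariance statements from the known invariance of $\mathcal Z_k(\psi) \in \mathcal A_k$ coming from \cite[Corollary 6.36]{[Lescop2]} via the fact that $w'_C$ annihilates every possible correction class; and then I will derive the exponential identity from the group-likeness of $\tilde{\mathcal Z}(\psi) = \sum_k \mathcal Z_k(\psi) h^k \in \mathcal A[[h]]$ together with the multiplicativity of $w_C$.

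\textbf{Invariance.} Standard arguments as in \cite[Corollary 6.36]{[Lescop2]} produce a genuine diffeomorphism invariant $\mathcal Z_k(\psi) \in \mathcal A_k$ that differs from $\mathcal Z_k^F(\psi,\tau)$ only by correction terms supported in $\mathcal N_k \oplus \mathcal T_k$: the Bott-Taubes anomaly $\alpha$ is trivalent, and the additional boundary contributions appearing in the compactifications of $C_{\Gamma,J}^0(\psi)$ when the propagator or parallelization is varied are non-trivial products of lower-degree diagrams. By Lemma \ref{wc}, $w'_C$ vanishes identically on $\mathcal N_k\oplus\mathcal T_k$, hence $w'_C \circ \mathcal Z_k^F(\psi,\tau) = w'_C\circ \mathcal Z_k(\psi)$, which proves at once independence of $F$, independence of $\tau$, and diffeomorphism invariance of this quantity.

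\textbf{Exponential formula.} The element $\tilde{\mathcal Z}(\psi)$ is group-like: this is the standard group-likeness of the perturbative Chern-Simons invariant in the commutative cocommutative Hopf algebra $\mathcal A$ recalled right after Definition \ref{produit}. Thus $\tilde{\mathcal Z}(\psi) = \exp(X)$ for $X = \sum_{k\geq 1} X_k h^k$ with each $X_k \in \mathcal A_k$ primitive. Since non-trivial products are never primitive, every $X_k$ splits as $X_k = X_k^{\mathcal P} + X_k^{\mathcal T}$ with $X_k^{\mathcal P}\in \mathcal P_k$ and $X_k^{\mathcal T}\in \mathcal T_k$ connected and fully trivalent. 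Using the multiplicativity of $w_C$ from Lemma \ref{wc0},
$$w_C(\tilde{\mathcal Z}) = w_C(\exp(X)) = \exp\bigl(w_C(X)\bigr)\in\R[[h]].$$
Since $w_C$ vanishes on $\mathcal T$, $w_C(X) = w_C(X^{\mathcal P})$, where $X^{\mathcal P}=\sum_k X_k^{\mathcal P} h^k$. On the other hand, $p^c$ kills both $\mathcal N$ and $\mathcal T$, so $p^c(\tilde{\mathcal Z}) = p^c(X) = X^{\mathcal P}$, and hence $w'_C(\tilde{\mathcal Z}) = w_C(X^{\mathcal P}) = w_C(X)$. Combining the two gives
$$\sum_{k\geq 0}(w_C\circ \mathcal Z_k)(\psi)\,h^k = \exp\bigl(w'_C(\tilde{\mathcal Z})\bigr) = \exp\Bigl(\sum_{k\geq 1}(w'_C\circ\mathcal Z_k)(\psi)\,h^k\Bigr),$$
as required.

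The technically delicate step is the invariance half: it requires verifying that under every change of propagator family or parallelization the difference $\mathcal Z_k^F(\psi,\tau) - \mathcal Z_k(\psi)$ actually lies in $\mathcal N_k\oplus \mathcal T_k$, where $w'_C$ absorbs it. Matching the conventions of \cite{[Lescop2]} with the present normalization (in particular the $3k$-family of propagators indexed by $\und{3k}$ and the symmetry factor $(3k-\Card(\aretes))!/(3k)!$) is the main bookkeeping obstacle. Once invariance is in hand, the exponential formula is a purely algebraic consequence of the Hopf-algebra structure of $\mathcal A$ and of the defining properties of $w_C$ and $w'_C$ in Lemmas \ref{wc0} and \ref{wc}.
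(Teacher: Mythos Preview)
Your treatment of the exponential identity is essentially the paper's: both use the multiplicativity of $w_C$ together with $\exp\circ p^c\circ\mathcal Z=\mathcal Z$ (your group-likeness statement, which the paper quotes from \cite[Theorems~12.18 and~6.37]{[Lescop2]}). The step $p^c(\exp X)=p^c(X)$ is fine once you note that all higher powers of $X$ land in $\bigoplus_k\mathcal N_k$.

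The invariance half, however, has a genuine gap. You assert that the correction terms separating $\mathcal Z_k^F(\psi,\tau)$ from the true invariant lie in $\mathcal N_k\oplus\mathcal T_k$, because ``the Bott--Taubes anomaly $\alpha$ is trivalent''. This is false: the knot anomaly $\alpha_k$ is a combination of \emph{connected} diagrams with exactly two univalent vertices (for instance $2\alpha_1$ is the single-chord diagram, as recalled in the proof of Proposition~\ref{CSK}), so $\alpha_k\in\mathcal P_k$, not $\mathcal T_k$. Nor do changes of parallelization produce ``non-trivial products of lower-degree diagrams''; they produce precisely the anomaly classes. Hence $w'_C$ does not annihilate the corrections for the reason you give, and your appeal to Lemma~\ref{wc} does not close the argument. (Incidentally, \cite[Corollary~6.36]{[Lescop2]} is the source for the direct-sum splitting of Lemma~\ref{cor27}, not for invariance.)

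The paper's argument proceeds differently. First, $\mathcal Z_k^F$ is already independent of $F$ as an element of $\mathcal A_k$ by \cite[Theorem~12.32]{[Lescop2]}---no correction is needed here. Second, the dependence on $\tau$ and the diffeomorphism non-invariance are carried by $\tfrac14 p_1(\tau)\beta_k+I_\theta(\psi,\tau)\alpha_k$, and the key input is a \emph{parity} statement: $\alpha_k=\beta_k=0$ for $k$ even (\cite[Propositions~10.14 and~10.20]{[Lescop2]}), while $w_C$---hence $w'_C$---vanishes on all odd-degree diagrams (Lemma~\ref{wc0}). This is what forces $w'_C(\alpha_k)=w'_C(\beta_k)=0$ for every $k$; it cannot be replaced by the claim that the anomalies lie in $\ker p^c$.
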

\begin{proof}
\cite[Theorem 12.32]{[Lescop2]}, which is a mild generalization for long knots of \cite[Theorem 7.20]{[Lescop2]}, implies that 
\[Z_{k}^F(\psi,\tau) = \sum\limits_{(\Gamma,j)\in\graphesJn} \frac{(3k-\Card(\aretes))!}{(3k)!} I^F(\Gamma,j, \psi)[\Gamma]\] 
is independent of the choice of the family $F$ of propagators of $(\spamb,  \tau)$.
\cite[Theorem 12.32]{[Lescop2]}, the last assertion of \cite[Theorem 12.18]{[Lescop2]} and \cite[Theorem 6.37]{[Lescop2]} imply that
$$\mathfrak Z_k(\psi)= p^c\circ Z_k^F(\psi, \tau) -\frac14 p_1(\tau) \beta_k - I_\theta(\psi, \tau) \alpha_k$$
does not depend on the parallelization and is invariant under diffeomorphism, 
where $\frac14 p_1(\tau)$ and $ I_\theta(\psi, \tau)$ are some real numbers, 
and where $\alpha_k$ and $\beta_k$ are two elements
of $\diagrammes$, called anomalies. \cite[Propositions 10.14 \& 10.20]{[Lescop2]} imply
that $\alpha_k$ and $\beta_k$ vanish if $k$ is even. Since $w_C$ vanishes on odd-degree
diagrams, this proves that $w'_C \circ \mathcal Z_k^F$ as defined in the theorem coincides 
with $w'_C \circ \mathfrak Z_k$. In particular, this implies the first three
assertions of the theorem.

%
%
%
Set $\mathcal Z = \sum\limits_{k\geq0} \mathcal Z_k h^k$ in $\mathcal A[[h]]$.
Since $w_C$ is multiplicative, 
$\exp\circ w_C = w_C \circ \exp$. The last assertion of \cite[Theorem 12.18]{[Lescop2]} and \cite[Theorem 6.37]{[Lescop2]} imply that
$\exp\circ p^c\circ\mathcal Z = \mathcal Z$. Therefore, 
\[
w_C\circ \mathcal Z = w_C \circ \exp \circ p^c \circ \mathcal Z
= \exp\circ w_C\circ p^c \circ \mathcal Z
= \exp\circ w'_C \circ \mathcal Z.\qedhere\]
\end{proof}

\subsection{The BCR invariants}
The following theorem is the main result of this article.
It is proved in Section \ref{Partie2}.
\begin{theo}\label{th1}
Fix an integer $k\geq2$, a null-homologous long knot $\psi$ of a parallelized asymptotic 
rational homology $\R^3$ $(\spamb, \tau)$, and a $k$-family $F=(\Prop_i)_{i\in\und{3k}}$ of propagators of $(\spamb, \tau)$.
Set 
\[ Z^F_{BCR, k}(\psi, \tau) =  \sum\limits_{(\Gamma,\sigma)\in\graphesnum} 
\frac{(3k-\Card(\aretesexternes))!}{(3k)!}
 I^F(\Gamma,\sigma, \psi)
 ,\]
where $\graphesnum$ denote the set of degree $k$ numbered BCR diagrams up to numbered graph isomorphisms.
We have \[
Z_{BCR, k}^F(\psi, \tau)= - (w'_C\ \circ\mathcal  Z_{k})(\psi)
.\]
\end{theo}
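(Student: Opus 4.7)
The plan is to establish this identity by matching the numbered BCR diagrams indexing the left-hand side with the numbered Jacobi diagrams appearing on the right. First, I would analyze the right-hand side using Lemma~\ref{wc}: the form $w'_C$ vanishes on $\mathcal{A}_0 \oplus \mathcal{A}_1$, on any Jacobi diagram with a trivalent connected component, on non-trivial products, and on diagrams with more trivalent vertices than their degree. Hence only numbered Jacobi diagrams that are connected, primitive, and have at most $k$ trivalent vertices (equivalently, at least $k$ univalent vertices) contribute non-trivially, and the characterization of $w'_C$ is determined by its values on the wheel diagrams $\Gamma_k$ of Figure~\ref{figgk}.

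Next I would exploit the parallel structure of BCR diagrams: as noted after Definition~\ref{Def-BCR}, every BCR diagram of degree $k$ consists of a backbone cycle (alternating between external white vertices and internal trivalent black vertices, possibly subdivided by internal bivalents) with a leg attached at each external or internal trivalent vertex, going to one of the $k$ internal univalent vertices. I would define a contraction map $\Phi$ from numbered BCR diagrams to numbered wheel-type Jacobi diagrams by collapsing each external vertex and each bivalent vertex on the cycle, so that the backbone becomes a cycle of trivalent Jacobi vertices, each retaining its leg to a univalent. Each wheel Jacobi diagram $\Gamma_J$ then has a controlled family of preimages under $\Phi$, parametrized by the insertions of external and bivalent vertices along the cycle.

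Then I would match the integrands. For a BCR preimage of $\Gamma_J$, the bivalent vertices on the cycle act as edge subdivisions equipped with the sign form $\alpha$ and the combinatorial factor $(-1)^{N_i^-}/2^{\Card(\aretesinternes)}$; the antisymmetry of the propagators and the definition of $\alpha$ should allow the summation over bivalent placements and orientations to collapse each cycle segment into a single propagator assignment on the corresponding edge of $\Gamma_J$. The white external vertices are absorbed into Jacobi trivalent vertices whose vertex-orientation is dictated by the prescribed ``two incoming, one outgoing'' pattern of the adjacent external edges. The common normalization $(3k - \Card(\aretes))!/(3k)!$ on both sides reflects the same counting of injections of edge sets into $\und{3k}$, which explains the choice of $3k$ in the numbering of BCR diagrams (even though a degree $k$ BCR diagram has at most $2k$ external edges).

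The main obstacle is the sign bookkeeping needed to extract the global $-1$ and the exact values $w'_C([\Gamma_k]) = -1-(-1)^k$. The BCR orientation form $\Omega(\Gamma)$, built from distinct coordinates $\d X_v^i$ for heads, tails, and legs and multiplied by $\epsilon(\Gamma) = (-1)^{\Card(\aretesexternes) + N_T(\Gamma)}$, has to be reconciled with the Jacobi orientation form built from the ordered half-edge structure compatible with the vertex-orientation. I would organize the final calculation by fixing a wheel Jacobi diagram $\Gamma_J$, summing the BCR contributions over $\Phi^{-1}(\Gamma_J)$ with compatible numberings, and invoking the uniqueness characterization of $w'_C$ in Lemma~\ref{wc}: the vanishing of $w'_C$ on odd-degree wheels will correspond to a BCR cancellation under reversal of the cyclic orientation of the backbone, while the normalization on $[\Gamma_k]$ will fix the overall constant $-1$.
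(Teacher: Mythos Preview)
Your proposed contraction map $\Phi$ from BCR diagrams to wheel Jacobi diagrams is the wrong bridge, and the argument built on it would not go through. The right-hand side $(w'_C\circ\mathcal Z_k)(\psi)=\sum_{(\Gamma_J,j)}\frac{(3k-\Card(E(\Gamma_J)))!}{(3k)!}I^F(\Gamma_J,j,\psi)\,w'_C([\Gamma_J])$ is a sum over \emph{all} numbered Jacobi diagrams, and the integrals $I^F(\Gamma_J,j,\psi)$ have no reason to satisfy STU or IHX; you cannot reduce that sum to wheels at the level of integrals. Equally, on the BCR side you cannot ``collapse'' external or bivalent vertices in $I^F(\Gamma,\sigma,\psi)$: those vertices index genuine points of $\spamb$ or of the knot, and integrating them out does not simplify to a propagator on a contracted edge. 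So neither side of the identity can be cut down to wheels before the comparison.

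The paper's mechanism is quite different and supplies exactly the piece you are missing. One first decomposes $C^0_{\Gamma,BCR}(\psi)$ according to the total order $\rho$ induced on the internal vertices by $c_i$; on each piece the sign $(-1)^{N_i^-}/2^{\Card(\aretesinternes)}$ is constant, and the remaining integral is literally $I^F(\Gamma_\rho,j_{\rho,\sigma},\psi)$ for the Jacobi diagram $\Gamma_\rho$ obtained by \emph{keeping all vertices} of $\Gamma$, discarding the internal edges, and using $\rho$ as the order on univalent vertices (Lemma~\ref{lmint}). Inverting this correspondence defines a combinatorial weight $w_{BCR}(\Gamma_J)$ on arbitrary Jacobi diagrams such that $Z^F_{BCR,k}=\sum_{(\Gamma_J,j)}\frac{(3k-\Card(E(\Gamma_J)))!}{(3k)!}I^F(\Gamma_J,j,\psi)\,w_{BCR}(\Gamma_J)$. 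The substantive work, which your outline does not address, is then to show that $w_{BCR}$ descends to $\diagrammes$: compatibility with STU is proved by an explicit sign-preserving bijection between ordered BCR diagrams over $\Gamma_J$ and over the pair $\Gamma_J^{(1)},\Gamma_J^{(2)}$ (Lemma~\ref{STU}), and vanishing on non-trivial products by a sign-reversing involution (Lemma~\ref{conn}). Only after these facts are in hand does the uniqueness characterization of Lemma~\ref{wc} apply, and the computation on $\Gamma_k$ (Lemma~\ref{calculgk}) then fixes $w_{BCR}=-w'_C$.
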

Note that the coefficients $\frac{(3k-\Card(\aretesexternes))!}{(3k)!}$ in the definition of 
$Z^F_{BCR, k}$ replace the $\frac1{(2k)!}$ in 
the definition of \cite[Theorem 2.10]{article1}
since we allowed numberings to take value in $\und{3k}$ and since the internal edges are not
numbered anymore.

The above theorem and Theorem \ref{th0} imply the following corollary. 
\begin{cor}\label{bonnedef}
The quantity $Z_{BCR,k}(\psi)=Z_{BCR,k}^F(\psi,\tau)$ does not depend on the choice of the propagators of $(\spamb,  \tau)$, 
nor of the parallelization, and is invariant under ambient diffeomorphism.
\end{cor}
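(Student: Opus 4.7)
The proof of this corollary is essentially a one-line deduction from the two results already in place, namely Theorem \ref{th1} and Theorem \ref{th0}. The plan is simply to transfer the invariance properties of $w'_C\circ\mathcal{Z}_k$ across the equality provided by Theorem \ref{th1}.

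More concretely, I would argue as follows. Fix a null-homologous long knot $\psi$ of a parallelized asymptotic rational homology $\R^3$ $(\spamb,\tau)$, and let $F$ be any $k$-family of propagators of $(\spamb,\tau)$. By Theorem \ref{th1}, we have the identity
\[
Z_{BCR,k}^F(\psi,\tau) = -(w'_C\circ \mathcal{Z}_k)(\psi).
\]
The right-hand side is, by construction, written without reference to $F$ or $\tau$; the first three bullets of Theorem \ref{th0} assert precisely that $w'_C\circ\mathcal{Z}_k$ is independent of the chosen family of propagators, independent of the parallelization, and depends only on the diffeomorphism class of $(\spamb,\psi)$.

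Combining these two facts, the left-hand side $Z_{BCR,k}^F(\psi,\tau)$ must itself be independent of $F$ and of $\tau$, and must be invariant under ambient diffeomorphism of $(\spamb,\psi)$. This justifies suppressing $F$ and $\tau$ from the notation and writing $Z_{BCR,k}(\psi)$, proving the corollary.

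There is no real obstacle in this proof: the entire mathematical content lives in Theorem \ref{th1}, whose proof is deferred to Section \ref{Partie2}, and in Theorem \ref{th0}, which is deduced from Lescop's invariance theorems for the perturbative expansion of the Chern-Simons theory. The corollary itself is merely the observation that these two results can be chained together.
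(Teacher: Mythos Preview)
Your proposal is correct and matches the paper's approach exactly: the paper simply states that the corollary follows from Theorem \ref{th1} and Theorem \ref{th0}, which is precisely the chaining you describe.
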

\subsection{Relation with the Alexander polynomial}
Since the propagators defined in \cite[Section 4]{article2} are dual to propagators in the sense of Definition \ref{prop},
\cite[Theorem 2.31]{article2} implies the following theorem.
\begin{theo}\label{th-1}
Let $\spamb$ be an asymptotic rational homology $\R^3$, 
and let $\psi\colon\R\hookrightarrow \spamb$ be a null-homologous long knot of $\spamb$. 
If $\Delta_\psi(t)$ denotes the Alexander polynomial of $\psi$, then  
\[ \Delta_\psi(e^h)= \exp\left( -\sum\limits_{k\geq2} Z_{BCR,k}(\psi) h^k \right). \]
\end{theo}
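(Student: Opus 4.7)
The plan is to reduce the statement to the main computation of the companion paper \cite{article2}, combined with the invariance property just established in Corollary \ref{bonnedef}. The key observation is that the choice of propagators is immaterial for the value of $Z_{BCR,k}(\psi)$, so one is free to pick the most convenient family for the computation, namely the so-called admissible propagators constructed in \cite[Section 4]{article2}. These admissible propagators are tailored to the Alexander module of $\psi$, and they make the integrand $\omega^F(\Gamma,\sigma,\psi)$ essentially combinatorial along each BCR diagram.

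First, I would recall the statement of \cite[Theorem 2.31]{article2}. That result expresses the sum $\sum_k Z_{BCR,k}^F(\psi,\tau) h^k$, computed with the admissible family $F$, directly in closed form as $-\log \Delta_\psi(e^h)$ (equivalently, the exponential identity of the target theorem). This is the analytic content of the result; it is obtained in \cite{article2} via a direct evaluation of the configuration space integrals by reducing them to counts of graphs in the universal cyclic cover, weighted by Alexander data.

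Second, I would reconcile the two notions of propagator. The propagators in \cite[Section 4]{article2} are formulated in a dual (current-theoretic) setting: instead of closed $2$-forms on $C_2(M)$ with prescribed boundary behavior, they are $(4{-}2)$-dimensional cycles representing the Poincaré-Lefschetz dual class. The integrals $I^F(\Gamma,\sigma,\psi)$ of Section \ref{s241} are then intersection numbers, numerically equal to the form-theoretic integrals of Definition \ref{prop} by standard duality. Therefore \cite[Theorem 2.31]{article2} holds verbatim with the admissible propagators interpreted as a specific family in the sense of Definition \ref{prop}.

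Finally, Corollary \ref{bonnedef} removes the dependence on the propagator family: $Z_{BCR,k}(\psi)$ equals $Z_{BCR,k}^F(\psi,\tau)$ for any family $F$, including the admissible one. Substituting into the identity from step one yields the desired exponential formula. The main obstacle in this strategy is the duality reconciliation in step two, since the conventions of \cite{article2} and of the present paper differ in orientation and in the geometric incarnation of $\delta_P$, so some care is needed to verify that signs and boundary normalizations match. Once that is checked, the theorem follows by direct substitution.
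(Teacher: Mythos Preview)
Your proposal is correct and follows essentially the same approach as the paper: the paper's entire justification is the sentence preceding the theorem, namely that the propagators of \cite[Section~4]{article2} are dual to propagators in the sense of Definition~\ref{prop}, so that \cite[Theorem~2.31]{article2} applies. Your three steps (cite \cite[Theorem~2.31]{article2}, reconcile the dual/chain-level propagators with the form-level ones here, and invoke Corollary~\ref{bonnedef} to remove the dependence on $F$ and $\tau$) are exactly the content of that sentence unpacked, with the propagator-independence step left implicit in the paper since $Z_{BCR,k}(\psi)$ has already been defined via Corollary~\ref{bonnedef}.
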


The above theorem and Theorems \ref{th0} and \ref{th1} imply the following.
\begin{cor}\label{e2}
If $\spamb$ is an asymptotic rational homology $\R^3$, and if $\psi\colon\R\hookrightarrow \spamb$ is a null-homologous long knot of $\spamb$, then 
\[
\Delta_\psi(e^h) = \exp\left( \sum\limits_{k>0}(w'_C\circ\mathcal Z_k)(\psi) h^k\right)= \sum\limits_{k\geq0}(w_C\circ\mathcal Z_k)(\psi) h^k.
\]
\end{cor}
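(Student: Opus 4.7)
The plan is to chain the three preceding theorems, which were precisely engineered to fit together. From Theorem \ref{th-1}, we already have
\[\Delta_\psi(e^h) = \exp\left(-\sum_{k\geq2} Z_{BCR,k}(\psi)\,h^k\right),\]
so the first task is to rewrite the exponent using Theorem \ref{th1}, which asserts that for each $k\geq2$, $Z_{BCR,k}(\psi) = -(w'_C\circ\mathcal Z_k)(\psi)$ (via Corollary \ref{bonnedef}, this is genuinely an invariant, independent of the chosen family of propagators and of the parallelization). Substituting yields
\[\Delta_\psi(e^h) = \exp\left(\sum_{k\geq2}(w'_C\circ\mathcal Z_k)(\psi)\,h^k\right).\]

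Next, I would observe that the $k=0$ and $k=1$ terms can be harmlessly appended to the sum without changing anything: Lemma \ref{wc} tells us that $w'_C$ vanishes identically on $\mathcal A_0\oplus\mathcal A_1$, so $(w'_C\circ\mathcal Z_0)(\psi) = (w'_C\circ\mathcal Z_1)(\psi)=0$. This produces the first claimed equality
\[\Delta_\psi(e^h) = \exp\left(\sum_{k>0}(w'_C\circ\mathcal Z_k)(\psi)\,h^k\right).\]

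Finally, the second equality is exactly the last bulleted assertion of Theorem \ref{th0}, applied to the long knot $\psi$ in the parallelized asymptotic rational homology $\R^3$ $(\spamb,\tau)$. Composing the two identities gives the corollary. There is no real obstacle here; the work has been packaged into the earlier statements (in particular, the combinatorial identity between $w_C$ and its logarithm $w'_C$ in Theorem \ref{th0}, and the identification of $Z_{BCR,k}$ with $-w'_C\circ \mathcal Z_k$ in Theorem \ref{th1}, whose proof occupies Section \ref{Partie2}). The only thing to be careful about is that Theorem \ref{th1} is stated only for $k\geq2$, but this is exactly compensated by the vanishing of $w'_C$ on $\mathcal A_0\oplus\mathcal A_1$ noted above, so no boundary term is lost.
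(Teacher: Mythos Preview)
Your proof is correct and follows exactly the route the paper intends: the corollary is stated immediately after Theorem \ref{th-1} with the remark that it follows from that theorem together with Theorems \ref{th0} and \ref{th1}, and your chaining of these three results (with the observation from Lemma \ref{wc} that $w'_C$ vanishes on $\mathcal A_0\oplus\mathcal A_1$) is precisely the intended derivation.
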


\subsection{Compatibility with the formula in terms of the Kontsevich integral}

In this section, we prove that Corollary \ref{e2} can be stated equivalently in terms of the Kontsevich integral
or of the perturbative expansion of the Chern-Simons theory, for long knots of $\R^3$. 

\begin{prop}\label{CSK}
For any long knot $\psi$ of $\R^3$,
\[
(w_C \circ \mathcal Z_k)(\psi)=(w_C \circ \mathcal Z^K_k)(\psi)
\]
\end{prop}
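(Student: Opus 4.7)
The plan is to exploit the explicit relation between the two invariants $\mathcal Z$ and $\mathcal Z^K$ obtained by Lescop in \cite{LesJKTR}, and then use the multiplicativity of $w_C$ to show that the correction factor collapses to $1$.

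First, I would recall the precise form of Lescop's theorem: for a long knot $\psi$ of $\R^3$, the two invariants are related by
\[ \mathcal Z(\psi) = \exp(I_\theta(\psi)\,\alpha)\cdot\mathcal Z^K(\psi), \]
where $\alpha = \sum_{k\geq 1}\alpha_k h^k \in \mathcal A[[h]]$ is the Bott--Taubes anomaly and $I_\theta(\psi)\in\R$ is the integral quantity already appearing in the proof of \ref{th0}. (In the setting of long knots in $\R^3$ the $\beta$-anomaly term and the parallelization defect do not contribute, since the canonical parallelization of $\R^3$ kills $p_1(\tau)$.) Applying $w_C$ to both sides of this identity and using that $w_C$ is an algebra morphism (fourth bullet of \ref{wc0}), so in particular $w_C\circ\exp = \exp\circ w_C$, gives
\[ w_C\circ\mathcal Z(\psi) \;=\; \exp\bigl(I_\theta(\psi)\, w_C(\alpha)\bigr)\cdot w_C\circ\mathcal Z^K(\psi). \]

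The heart of the proof is therefore to check that $w_C(\alpha)=0$ in $\R[[h]]$, that is, $w_C(\alpha_k)=0$ for every $k\geq 1$. I would split this into two cases by parity. If $k$ is even, then \cite[Proposition 10.20]{[Lescop2]} (quoted in the proof of \ref{th0}) says that $\alpha_k = 0$, so certainly $w_C(\alpha_k)=0$. If $k$ is odd, then $\alpha_k$ lives in $\mathcal A_k$, which is a space spanned by odd-degree diagrams, and the last assertion of \ref{wc0} tells us that $w_C$ vanishes identically on odd-degree diagrams. Hence $w_C(\alpha_k)=0$ in this case too. Therefore $w_C(\alpha)=0$ and the exponential factor above is $\exp(0)=1$, which yields the proposition after extracting the degree $k$ coefficient.

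The main obstacle is simply locating the exact statement of Lescop's comparison formula between $\mathcal Z$ and $\mathcal Z^K$ for long knots of $\R^3$ and verifying that the only discrepancy between them is an exponential in the anomaly $\alpha$ (with no $\beta$ contribution and no further correction). Once that formula is in hand, the rest of the argument is purely formal and depends only on the multiplicativity of $w_C$, the vanishing of $\alpha_k$ in even degrees, and the vanishing of $w_C$ in odd degrees, all of which are already recorded in the excerpt.
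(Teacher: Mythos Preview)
The proposal has a genuine gap: the form you quote for Lescop's comparison theorem is not the one in \cite{LesJKTR}. The relation proved there is
\[
\mathcal Z \;=\; \Psi(2\alpha)\bigl(\mathcal Z^K\bigr),
\]
where $\Psi(2\alpha)$ is the edge-replacement endomorphism of $\mathcal A$ described in the paper (for each connected component of a diagram one fixes $\deg(\Gamma_j)$ edges and replaces each by a two-legged piece taken from $2\alpha$). This is \emph{not} multiplication by an element of $\mathcal A[[h]]$ of the form $\exp(I_\theta(\psi)\alpha)$; the quantity $I_\theta(\psi,\tau)$ you invoke concerns the anomaly correction internal to $\mathcal Z$ (as in the proof of Theorem~\ref{th0}), not the comparison with $\mathcal Z^K$. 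Consequently the identity $w_C(\alpha_k)=0$, while true, does not by itself imply $w_C\circ\Psi(2\alpha)=w_C$, and your ``formal'' step does not go through.

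What the paper actually does is verify $w_C\circ\Psi(2\alpha)=w_C$ by checking that $w_C\circ\Psi(2\alpha)$ satisfies the four characterizing properties of $w_C$ in Lemma~\ref{wc0}. The key input (replacing your parity argument) is structural: for $k\geq 3$ the anomaly $\alpha_k$ is a combination of degree~$k$ diagrams with exactly two univalent vertices, hence $2k-2>k$ trivalent vertices; together with $\alpha_2=0$ and $2\alpha_1=[\Gamma_\theta]$ (which acts as the identity under $\Psi$), any nontrivial edge-replacement pushes the number of trivalent vertices above the degree, and the third bullet of Lemma~\ref{wc0} kills the result. Multiplicativity of $w_C$ is then used to handle products. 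Your odd/even vanishing observation is correct but is not the mechanism that makes the proof work.
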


\begin{proof}
Let $\gamma = (\gamma_k)_{k\in \mathbb N\setminus\{0\}}$, where $\gamma_k$ is a combination of degree $k$ diagrams 
with exactly two univalent vertices,
and $\gamma_1\neq0$.
Lescop \cite[Definition 2.1]{LesJKTR} proved the existence of a well-defined morphism 
$\Psi(\gamma)\colon\mathcal A\rightarrow \mathcal A$
such that for any Jacobi diagram $\Gamma$,
$\Psi(\gamma)([\Gamma])$ is obtained as follows : 
\begin{itemize}
\item Write $\gamma_k = \sum\limits_{i\in I_k} b(\Gamma_{i,k}) [\Gamma_{i,k}]$ for any $k\geq1$, and set 
$I= \{\Gamma_{i,k}\mid k \geq1,  i \in I_k\}$.
\item In each connected component $\Gamma_j$ of $\Gamma$, fix
$\deg(\Gamma_j)$ edges, and let $X$ denote the set of the chosen edges.
\item For any map $\xi \colon X\rightarrow I$,
let $\Psi^0(\gamma)(\Gamma, \xi)$ denote the graph obtained from $\Gamma$
after replacing each edge $e$ of $X$ with $\xi(e)$.
\item Set $$\Psi(\gamma)([\Gamma])
= \sum\limits_{\xi \colon X\rightarrow I}\left( \prod\limits_{e\in X}b(\xi(e))\right)[\Psi^0(\gamma)(\Gamma, \xi)].
$$
\end{itemize}
With these notations, \cite[Theorem 2.3]{LesJKTR} states
that $\mathcal Z = \Psi(2\alpha)(\mathcal Z^K)$ where $\alpha$ is the Bott-Taubes anomaly.
\cite[Proposition 10.20]{[Lescop2]} implies that $2\alpha_1$ is the class of the diagram $\Gamma_\theta$ with one edge between two univalent vertices.

Proposition \ref{CSK} follows from the following lemma. 
\end{proof}

\begin{lm}
$w_C \circ \Psi(2\alpha) = w_C.$
\end{lm}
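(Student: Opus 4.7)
My approach is to exploit the multiplicativity of $w_C$ together with a direct vertex count on the correction terms of $\Psi(2\alpha)$. The whole argument is essentially a reduction to connected diagrams followed by a simple inequality ``connected $\Rightarrow$ excess of trivalent vertices is bounded below''.

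First, I would observe that $\Psi(\gamma)$ is an algebra morphism for any admissible $\gamma$. In the definition recalled above, the set $X$ of chosen edges is built component-by-component, so for a product $\Gamma = \Gamma_1 \Gamma_2$ the splitting $X(\Gamma) = X(\Gamma_1) \sqcup X(\Gamma_2)$ induces a factorization $\xi = \xi_1 \sqcup \xi_2$ and $\Psi^0(\gamma)(\Gamma,\xi) = \Psi^0(\gamma)(\Gamma_1,\xi_1) \cdot \Psi^0(\gamma)(\Gamma_2,\xi_2)$ in the sense of Definition \ref{produit}, so $\Psi(\gamma)(\Gamma_1\Gamma_2) = \Psi(\gamma)(\Gamma_1)\,\Psi(\gamma)(\Gamma_2)$. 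Combined with the multiplicativity of $w_C$ (Lemma \ref{wc0}), this makes $w_C \circ \Psi(2\alpha)$ a ring homomorphism $\mathcal A \to \R$. Since any Jacobi diagram factors as a product of its connected components, the classes of connected diagrams generate $\mathcal A$ as an algebra, and it suffices to prove the identity on connected $\Gamma$.

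If $\Gamma$ is connected with no univalent vertex, both sides vanish: every diagram appearing in $I$ has exactly two univalent vertices, both identified with pre-existing vertices of $\Gamma$ under the substitution, so the total number of univalent vertices is preserved, and $w_C$ kills purely trivalent diagrams by definition.

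The main case is $\Gamma$ connected with at least one univalent vertex. The summand with $\xi(e) = \Gamma_\theta$ for every $e \in X$ (the unique component of $2\alpha_1$, with coefficient $1$) returns exactly $[\Gamma]$. Every other $\xi$ replaces at least one edge by a diagram from some $2\alpha_{k_e}$ with $k_e \geq 2$; by the parity result $\alpha_k = 0$ for even $k$ invoked in the proof of Theorem \ref{th0}, such a $k_e$ is in fact $\geq 3$ and odd. A direct vertex count shows that substituting one edge by a degree-$k_e$ diagram with two univalent vertices adds $2k_e - 2$ trivalent vertices and $k_e - 1$ to the degree, so the global excess $t(\cdot) - \deg(\cdot)$ grows by $k_e - 1 \geq 2$. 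The key elementary observation is that any connected Jacobi diagram $\Gamma$ with $u$ univalent and $t$ trivalent vertices satisfies $|E(\Gamma)| \geq |V(\Gamma)| - 1$, which rewrites as $(u+3t)/2 \geq u + t - 1$, i.e., $t - \deg(\Gamma) = (t-u)/2 \geq -1$. Hence every correction diagram $\Psi^0(\Gamma,\xi)$ has global excess $\geq -1 + 2 > 0$, and $w_C$ vanishes on it by Lemma \ref{wc0}. Summing over $\xi$, $w_C(\Psi(2\alpha)([\Gamma])) = w_C([\Gamma])$.

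I expect the only real subtlety to be spotting the connectivity inequality $t \geq u - 2$ and combining it with the parity $\alpha_k = 0$ for even $k$ (the latter is used only to exclude the borderline case $k_e = 2$, where the excess increase would merely be $1$). Once these two ingredients are in hand, the argument is essentially a routine vertex count combined with the multiplicativity reduction to connected diagrams.
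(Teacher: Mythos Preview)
Your argument is correct and uses the same two essential ingredients as the paper's proof: the multiplicativity of $w_C\circ\Psi(2\alpha)$ and the trivalent--degree excess count on the correction terms (together with the vanishing of the even $\alpha_k$). The organization differs. The paper verifies the four characterizing properties of $w_C$ given in Lemma~\ref{wc0} and concludes by uniqueness: multiplicativity is checked directly (this is exactly your factorization $X(\Gamma)=X(\Gamma_1)\sqcup X(\Gamma_2)$), and the excess count is applied only to the specific diagrams $\Gamma_p$ and to diagrams already satisfying $t>\deg$. You instead reduce to connected $\Gamma$ by algebra-generation and then use the connectivity inequality $t-\deg\geq -1$ to show that every non-identity substitution pushes the excess strictly above~$0$. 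Your route avoids invoking Lemma~\ref{wc0} as a uniqueness statement but needs the (easy) graph-theoretic bound on connected unitrivalent diagrams; the paper's route avoids that bound but relies on the characterization lemma. Both are short and equivalent in content.
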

\begin{proof}\belowdisplayskip=-12pt
Set $w= w_C \circ \Psi(2\alpha)$, and let us check that $w$ 
satisfies the four properties of Lemma \ref{wc0}.
\begin{itemize}
\item The first property is immediate.

\item For any $k\geq3$, $2\alpha_k$ is a combination of diagrams with $2k-2>k$ trivalent vertices. 
Therefore, for any $p\geq2$ and any $\xi \colon X \rightarrow I$ as above,
if $\xi$ maps at least one edge to an element different from $\Gamma_\theta$,
then the number of trivalent vertices of $\Psi^0(2\alpha)(\Gamma_p, \xi)$ is greater than its degree.
This implies that $w([\Gamma_p])= w_C([\Gamma_p])$.

\item If $\Gamma$ contains more than $\deg(\Gamma)$ trivalent vertices,
then the same argument implies that $w([\Gamma])=0$.

\item If $\Gamma$ is the product of two non-empty diagrams $\Gamma_1$ and $\Gamma_2$, set $X_i= X\cap E(\Gamma_i)$ for $i\in\{1,2\}$. 
For any $\xi\colon X\rightarrow I$, set $\xi_i= \xi_{|X_i}$ for $i\in \{1,2\}$.
Note that $\Psi^0(2\alpha)(\Gamma, \xi)$ is the product of $\Psi^0(2\alpha)(\Gamma_1,\xi_1)$ and $\Psi^0(2\alpha)(\Gamma_2, \xi_2)$. Therefore, since $w_C$ is multiplicative, 
\begin{eqnarray*}
w([\Gamma]) &=& \sum\limits_{\xi\colon X\rightarrow I} 
\left(\prod\limits_{e\in X} b(\xi(e))\right)
w_C\left(\left[ \Psi^0(2\alpha)(\Gamma,\xi)\right] \right)\\
&=& \sum\limits_{\xi_1\colon X_1\rightarrow I} 
\sum\limits_{\xi_2\colon X_1\rightarrow I} 
\left(\prod\limits_{e\in X_1} b(\xi_1(e))\right)
\left(\prod\limits_{e\in X_2} b(\xi_2(e))\right)\\
& & \ \ \ \ \ \ \ \ 
w_C\left( \left[\Psi^0(2\alpha)(\Gamma_1,\xi_1)\right] \right)
w_C\left(\left[ \Psi^0(2\alpha)(\Gamma_2,\xi_2)\right] \right)\\
&=& w([\Gamma_1]) w([\Gamma_2]).
\end{eqnarray*}
\qedhere
\end{itemize}
\end{proof}
\section{Proof of Theorem \texorpdfstring{\ref{th1}}{2.14}}\label{Partie2}
\subsection{Definition of a weight system for the BCR invariants}

From now on, we fix a parallelized asymptotic rational homology $\R^3$
$(\spamb, \tau)$, a long knot $\psi\colon \R^n\hookrightarrow \spamb$, 
an integer $k\geq2$, 
and a $k$-family $F=(\Prop_i)_{i\in\und{3k}}$ of propagators of $(\spamb, \tau)$.

For any BCR diagram $\Gamma$, let $\ordres$ denote the set of total orders on $\sommetsinternes$.
Represent an element of $\ordres$ by the unique increasing bijection $\rho \colon \sommetsinternes\rightarrow \{1, \ldots,\Card(\sommetsinternes)\}$.
For any degree $k$ BCR diagram $\Gamma$, the configuration space 
$C_{\Gamma,BCR}^0(\psi)$ splits into a disjoint union of connected components 
$\bigsqcup\limits_{\rho\in\ordres} C_{\Gamma,BCR,\rho}^0(\psi)$ where 
\[C_{\Gamma,BCR,\rho}^0(\psi)= \left\{c \in C_{\Gamma,BCR}^0(\psi) ~ \Bigr| ~ 
\begin{array}{l}
\text{for any distinct internal vertices $v$, $w$, }\\ \rho(v)<\rho(w) \Leftrightarrow
\epsilon_{v,w}(c)>0 \end{array}
\right\}.\]

Any $\rho\in\ordres$ induces a bioriented Jacobi unitrivalent diagram $\Gamma_\rho$ with the same degree as follows. 
The vertices of $\Gamma_\rho$ are all the vertices of $\Gamma$
and the edges of $\Gamma_\rho$ are the external edges of $\Gamma$, so that 
the internal (resp. external) vertices of $\Gamma$ yield the univalent (resp. trivalent) vertices of $\Gamma_\rho$. 
The order of the univalent vertices of $\Gamma_\rho$ is given by $\rho$.
It remains to orient the trivalent vertices of $\Gamma_\rho$, 
i.e. to fix a cyclic order on the half-edges adjacent to any external vertex $v$. 
Let $e$ denote the external edge of the cycle going to $v$, 
let $\ell$ denote the leg going to $v$, 
and let $f$ denote the external edge of the cycle coming from $v$.
The orientation of $v$ is given by the cyclic order $( e_+,\ell_+, f_-)$.
Note that any numbering $\sigma$ of $\Gamma$ yields a canonical numbering $j_{\rho,\sigma}$ of $\Gamma_\rho$.
With these notations, the following lemma is immediate.

\begin{lm}\label{lmint}
For any BCR diagram $\Gamma$, any order $\rho\in\ordres$, and any numbering $\sigma$ of $\Gamma$,
\[
\int_{C_{\Gamma,BCR, \rho}(\psi)} \omega^F(\Gamma,\sigma, \psi) = \frac{\epsilon(\Gamma)\epsilon_2(\Gamma,\rho)}{2^{\Card(\aretesinternes)}}
I^F(\Gamma_\rho,j_{\rho,\sigma},\psi)
,\]
where $\epsilon(\Gamma)$ is defined in Section \ref{s241} and $\epsilon_2(\Gamma,\rho) =
\prod\limits_{(v,w)\in\aretesinternes} \sgn(\rho(w)-\rho(v))\in\{\pm1\}.$ 
\end{lm}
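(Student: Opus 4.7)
The plan is to compare the integral of $\omega^F(\Gamma,\sigma,\psi)$ over $C_{\Gamma,BCR,\rho}^0(\psi)$ with the integral $I^F(\Gamma_\rho,j_{\rho,\sigma},\psi)$ by identifying the two configuration spaces and tracking three sources of discrepancy: the prefactor in $\omega^F(\Gamma,\sigma,\psi)$, the difference between the BCR orientation $\Omega(\Gamma)$ and the Jacobi orientation $\Omega(\Gamma_\rho)$, and the conventions assigning $1$-forms to half-edges at external vertices.

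First, I would observe that $C_{\Gamma,BCR,\rho}^0(\psi)$ and $C_{\Gamma_\rho,J}^0(\psi)$ are literally the same subset of $\Inj(\sommets,\spamb)$: both consist of injections $c$ whose restriction to internal (equivalently univalent) vertices factors as $\psi\circ c_i$ with $c_i$ respecting $\rho$, since the total order on $V_i(\Gamma_\rho)$ is by construction $\rho$. Under the identification $\aretesexternes = E(\Gamma_\rho)$ and $\sigma = j_{\rho,\sigma}$, the wedge part of $\omega^F(\Gamma,\sigma,\psi)$ is exactly $\omega^F(\Gamma_\rho,j_{\rho,\sigma},\psi)$. The membership condition defining $C_{\Gamma,BCR,\rho}^0(\psi)$ forces $\epsilon_{v,w}(c)=\sgn(\rho(w)-\rho(v))$ for every internal edge $(v,w)$, so the scalar $(-1)^{N_i^-(\Gamma,c)}$ is the constant $\epsilon_2(\Gamma,\rho)$ on this component, yielding the pointwise identity
\[
\omega^F(\Gamma,\sigma,\psi) = \frac{\epsilon_2(\Gamma,\rho)}{2^{\Card(\aretesinternes)}}\,\omega^F(\Gamma_\rho,j_{\rho,\sigma},\psi).
\]

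The main step is to verify $\Omega(\Gamma)=\epsilon(\Gamma)\,\Omega(\Gamma_\rho)$ as top forms on the shared space, which then supplies the extra factor $\epsilon(\Gamma)$ when converting the BCR-oriented integral into $I^F(\Gamma_\rho,j_{\rho,\sigma},\psi)$. Both $\Omega(\Gamma)$ and $\Omega(\Gamma_\rho)$ are structured as wedges $\bigwedge_e(\Omega_{e_-}\wedge\Omega_{e_+})$ over the same set of edges, with identical $\d t_v$ assignments at univalent endpoints. The substance is the match at each external vertex $v$: under the BCR conventions the three adjacent half-edges contribute $\d X_v^1$ (non-leg incoming head), $\d X_v^2$ (leg head), $\d X_v^3$ (outgoing tail), while the Jacobi convention assigns $\d X_v^i$ to the $i$-th half-edge in any linearization compatible with the chosen cyclic orientation of $v$. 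The prescribed cyclic order $(e_+,\ell_+,f_-)$ for $\Gamma_\rho$ linearizes precisely to the BCR sequence, so the two local wedges at $v$ coincide; the only remaining sign is the global $\epsilon(\Gamma)$ built into $\Omega(\Gamma)$. Combining the integrand identity with this orientation comparison produces the announced formula. The only mildly delicate point, which is presumably why the authors call the lemma \emph{immediate}, is precisely this vertex-by-vertex half-edge matching; once the cyclic order on $\Gamma_\rho$ is unpacked it reduces to a direct case check.
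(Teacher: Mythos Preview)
Your proof is correct and is precisely the unpacking the paper has in mind when it declares the lemma immediate: identify $C_{\Gamma,BCR,\rho}^0(\psi)$ with $C_{\Gamma_\rho,J}^0(\psi)$, observe that $(-1)^{N_i^-(\Gamma,\cdot)}$ is constantly $\epsilon_2(\Gamma,\rho)$ there so the integrands agree up to $\epsilon_2(\Gamma,\rho)/2^{\Card(\aretesinternes)}$, and check that the chosen cyclic order $(e_+,\ell_+,f_-)$ at each external vertex makes the Jacobi half-edge labels $\d X_v^1,\d X_v^2,\d X_v^3$ coincide with the BCR ones, whence $\Omega(\Gamma)=\epsilon(\Gamma)\,\Omega(\Gamma_\rho)$.
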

Let us introduce the following notations.

\begin{nt}
For any degree $k$ vertex-oriented numbered Jacobi diagram $\Gamma_J$, let 
$\mathcal G(\Gamma_J,j)$ denote the set of ordered and numbered BCR diagrams $(\Gamma, \sigma, \rho)$ 
such that $(\Gamma_\rho, j_{\rho, \sigma})$ coincides with $(\Gamma_J, j)$, 
up to the orientation of the trivalent vertices and after forgetting the orientation of the edges.
For such a numbered and ordered BCR diagram, let $\epsilon_3(\Gamma_J,\Gamma, \rho)\in\{\pm1\}$ 
be such that $[\Gamma_\rho]= \epsilon_3(\Gamma_J,\Gamma, \rho)[\Gamma_J]$.
Set
\[w_{BCR}(\Gamma_J, j) = \sum\limits_{(\Gamma, \sigma, \rho)\in\mathcal G(\Gamma_J,j)} \frac{\epsilon(\Gamma)\epsilon_2(\Gamma,\rho)\epsilon_3(\Gamma_J,\Gamma,\rho)}{2^{2k-\Card(E(\Gamma_J))}}
,\] and note that $w_{BCR}(\Gamma_J, j)$ does not depend on $j$. Denote it by $w_{BCR}(\Gamma_J)$.
\end{nt}
We are going to prove the following proposition. 

\begin{prop}\label{th2}
For any Jacobi diagram $\Gamma_J$, $w_{BCR}(\Gamma_J) = - w'_C([\Gamma_J])$.
\end{prop}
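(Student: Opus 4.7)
The plan is to prove Proposition \ref{th2} by establishing that the assignment $\Gamma_J\mapsto -w_{BCR}(\Gamma_J)$ descends to a linear form on $\bigoplus_k \mathcal A_k$ satisfying the four characterizing properties of $w'_C$ in Lemma \ref{wc}; the uniqueness part of that lemma will then give $w_{BCR}=-w'_C$. Well-definedness requires invariance of $w_{BCR}$ under the AS, IHX and STU relations. The AS relation is built into the sign $\epsilon_3$, which by construction flips precisely when a trivalent-vertex orientation is reversed. For IHX and STU, I would pair BCR contributions to Jacobi diagrams on opposite sides of each relation via local modifications of the underlying BCR diagram $\Gamma$, checking that the combined signs $\epsilon\,\epsilon_2\,\epsilon_3$ realize the required cancellation.

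The first three properties of Lemma \ref{wc} are comparatively direct. Vanishing on $\mathcal A_0\oplus\mathcal A_1$ is immediate: BCR diagrams are nonempty by definition, and a direct vertex count using the five cases of Definition \ref{Def-BCR} rules out any degree-$1$ BCR diagram. For vanishing when the number of trivalent vertices of $\Gamma_J$ exceeds its degree, I would denote by $E$, $T$, $U$, $B$ the numbers of vertices of types 1, 2, 3 and 4+5 in a BCR diagram and deduce from Definition \ref{Def-BCR} the leg balance $U=E+T$ together with the edge-count identity $E+T+B/2=k$; these force $E\leq k$. Since $E$ is precisely the number of trivalent vertices of $\Gamma_\rho$, no BCR contracts to a $\Gamma_J$ whose number of trivalent vertices exceeds $k$. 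For vanishing on a nontrivial product $\Gamma_J=\Gamma_1\cdot\Gamma_2$, I would construct a sign-reversing involution on the set of triples $(\Gamma,\sigma,\rho)\in\mathcal G(\Gamma_J, j)$, exploiting that the BCR cycle structure giving rise to a disconnected $\Gamma_\rho$ forces at least one internal chunk with a type-4 and a type-5 bivalent endpoint; exchanging these endpoints (or reversing the orientation of the chunk) flips $\epsilon_2\,\epsilon_3$ while preserving $(\Gamma_\rho,j_{\rho,\sigma})$.

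The main computation is the value on $\Gamma_k$. Equality $E=k$ in the previous step forces $T=B=0$, so every internal vertex is of type 3; the underlying BCR diagram is then uniquely the $k$-cycle $\Gamma_k^{BCR}$ of external vertices with one leg attached to each, in the two cyclic orientations $\rho_a$ and $\rho_b$ depicted in the paper. Since there are no internal edges, $\epsilon_2=1$. Enumerating the two cyclic orientations of the cycle together with the orderings $\rho$ of the $k$ internal univalent vertices, a careful computation of $\epsilon(\Gamma)$ and $\epsilon_3(\Gamma_k,\Gamma,\rho)$ yields the total $1+(-1)^k$, matching $-w'_C([\Gamma_k])=1+(-1)^k$. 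The factor $(-1)^k$ arises because reversing the cyclic orientation of the BCR cycle flips each of the $k$ induced trivalent-vertex orientations, and hence multiplies $\epsilon_3$ by $(-1)^k$.

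The most intricate steps are the AS/IHX/STU invariance and the sign computation for $\Gamma_k$, both of which require careful bookkeeping of the three signs $\epsilon$, $\epsilon_2$ and $\epsilon_3$ under combinatorial modifications of BCR diagrams. The involution used for vanishing on products is likewise delicate: one must identify it precisely on each configuration, verify that it is fixed-point-free, and check that the sign change is indeed exactly $-1$ while the contracted Jacobi diagram remains unchanged.
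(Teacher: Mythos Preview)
Your overall strategy matches the paper's: both verify that $-w_{BCR}$ satisfies the four characterizing properties of Lemma~\ref{wc}, after first showing that $w_{BCR}$ descends to $\bigoplus_k\mathcal A_k$ (AS is immediate, IHX follows from STU on $\diagrammesc$ by \cite[Theorem~6]{Bar-NatanTop}, and STU is handled by a bijection between BCR contributions).

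There is one factual slip. You claim that a vertex count rules out any degree-$1$ BCR diagram, but there \emph{is} one: the $2$-cycle with one internal edge and one external edge between two bivalent internal vertices (types~4 and~5). The paper handles this by observing that the two orderings $\rho$ of its internal vertices contribute with opposite signs via $\epsilon_2$, so $w_{BCR}$ vanishes in degree~$1$ by cancellation, not by emptiness.

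Your sketched involution for the vanishing on non-trivial products (``exchange the type-4 and type-5 endpoints of an internal chunk'') is not the one the paper uses, and as stated it is not clear it is well-defined or sign-reversing. The paper's involution is more delicate: it uses the product decomposition $V(\Gamma)=V_1\sqcup V_2$ inherited from $\Gamma_J=\Gamma_J^{(1)}\cdot\Gamma_J^{(2)}$, singles out the unique internal edge $e_1=(v,w)$ from $V_1$ into a specific connected component $\Gamma_c\subset V_2$ (chosen via the minimal numbering), and then performs one of three local moves near $w$ depending on whether $w$ is bivalent adjacent to a bivalent vertex, trivalent with a leg, or bivalent adjacent to an external trivalent vertex. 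In the first two cases $\epsilon$ flips; in the third, $\rho$ is modified and $\epsilon_3$ flips. The crucial fact that $v\in V_1$ while the other vertices involved lie in $V_3\subset V_2$, together with the product ordering on univalent vertices, is what guarantees $\epsilon_2$ is preserved. Your proposed involution does not obviously exploit this ordering structure, which is where the product hypothesis actually enters.
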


Note that the above proposition and Lemma \ref{lmint} imply Theorem \ref{th1}.
We now prove Proposition \ref{th2} until the end of this article.
In the next subsections, we check that $(-w_{BCR})$ induces a linear map $\diagrammes\rightarrow \R$ that satisfies the properties of Lemma \ref{wc}. Note that it is immediate that $(-w_{BCR})$ vanishes on degree $0$ and $1$ diagrams, since a BCR diagram is non-empty, 
and since the only degree $1$ BCR diagram is counted with opposite signs 
when the order of its two internal vertices is changed.
The following lemma gives an example of diagrams with non-trivial $w_{BCR}$ and proves the second and third property of Lemma \ref{wc}.

\begin{lm}\label{calculgk}
\begin{enumerate}
\item If $\Gamma_J$ is a degree $k$ Jacobi diagram and has more than $k$ trivalent vertices, then $w_{BCR}(\Gamma_J)=0$.
\item If $\Gamma_{k}$ is the graph of Figure \ref{figgk}, then $w_{BCR}(\Gamma_k)=1+(-1)^k$.
\end{enumerate}
\end{lm}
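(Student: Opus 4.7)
The plan is to reduce both parts to a combinatorial analysis: identify which BCR diagrams $\Gamma$ satisfy $\Gamma_\rho \cong \Gamma_J$, and then compute the three signs $\epsilon(\Gamma)$, $\epsilon_2(\Gamma,\rho)$, $\epsilon_3(\Gamma_J, \Gamma, \rho)$ entering the definition of $w_{BCR}$.

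For (1), I would count edges. A degree $k$ Jacobi diagram $\Gamma_J$ with $N_T$ trivalent and $(2k - N_T)$ univalent vertices has $\Card(E(\Gamma_J)) = \frac{3 N_T + (2k - N_T)}{2} = N_T + k$. Any degree $k$ BCR diagram $\Gamma$ has $2k$ edges in total, and the external edges of $\Gamma$ are precisely the edges of $\Gamma_\rho$, so $\Gamma_\rho \cong \Gamma_J$ would force $\Card(\aretesinternes) = 2k - (N_T + k) = k - N_T$. If $N_T > k$ this quantity is negative, so $\mathcal G(\Gamma_J, j) = \emptyset$ and $w_{BCR}(\Gamma_J) = 0$.

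For (2), apply the same identity to $\Gamma_k$ (for which $N_T = k$), obtaining $\Card(\aretesinternes) = 0$. By Definition \ref{Def-BCR}, types $2$, $4$ and $5$ each require at least one internal edge, so only Type $1$ external vertices and Type $3$ univalent internal vertices can occur in $\Gamma$. The $k$ trivalent vertices of $\Gamma_k$ are therefore realized by Type $1$ vertices forming a cycle of $k$ external edges, and the $k$ univalent ones by Type $3$ vertices attached by legs; leg orientations are fixed by the BCR definition, so the only remaining freedom is the direction of the cycle, giving $|\mathcal G(\Gamma_k, j)| = 2$. With no internal edges, $\epsilon_2(\Gamma, \rho) = 1$ and the prefactor $\frac{1}{2^{2k - \Card(E(\Gamma_k))}} = 1$; furthermore, $N_T(\Gamma) = k$ and $\Card(\aretesexternes) = 2k$ give $\epsilon(\Gamma) = (-1)^{3k} = (-1)^k$. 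A direct local inspection at one vertex of the wheel shows that the BCR-induced cyclic order $(e_+, \ell_+, f_-)$ is uniformly CCW in the plane at every Type $1$ vertex for one cycle direction (matching the convention for $\Gamma_k$, hence $\epsilon_3 = 1$) and uniformly CW for the other (giving $\epsilon_3 = (-1)^k$ after $k$ applications of AS). Summing,
\[ w_{BCR}(\Gamma_k) = (-1)^k \bigl(1 + (-1)^k\bigr) = 1 + (-1)^k. \]

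The main obstacle is the sign computation of $\epsilon_3$: one must carefully verify that the cyclic order $(e_+, \ell_+, f_-)$ corresponds to the \emph{same} planar rotational sense at every vertex of the wheel, including the vertex through which the long arc of the cycle passes, so that the per-vertex AS contributions $\pm 1$ compound consistently into $(-1)^k$ rather than into an uneven mixture.
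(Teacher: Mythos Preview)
Your proposal is correct and follows essentially the same route as the paper. For part~(1) you use an edge count (forcing $\Card(\aretesinternes)=k-N_T<0$), whereas the paper argues directly that each external vertex of a BCR diagram has its own univalent neighbour; for part~(2) your ``two directions of the cycle'' are exactly the paper's two orderings $\rho_a,\rho_b$ on the unique wheel $\Gamma_k^{BCR}$, and your sign analysis ($\epsilon_2=1$, $\epsilon=(-1)^k$, $\epsilon_3\in\{1,(-1)^k\}$) matches the paper's computation verbatim.
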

\begin{proof}
The first point is immediate since 
any external vertex of a BCR diagram has a univalent neighbour, 
so that there cannot be more than $k$ external vertices in a degree $k$ BCR diagram.

For the second point, given a numbering $j$ of $\Gamma_k$,
there are exactly two elements in $\lettre(\Gamma_k, j)$, and 
they are given by $(\Gamma_k^{BCR}, \rho_a, j_a)$ and $(\Gamma_k^{BCR},\rho_b, j_b)$,
as depicted in Figure \ref{fig2}, where $v_i$ denotes the vertex $\rho^{-1}(i)$, and the numberings $j_a$ and $j_b$ are uniquely determined by $j$.

\begin{figure}[H]
\centering
\graphekA \ \ \graphekB \ \ \ \ \graphek
\caption{}
\label{fig2}
\end{figure}

Note that $\epsilon_2(\Gamma_k^{BCR},\rho_a)=\epsilon_2(\Gamma_k^{BCR},\rho_b)=1$, and that 
$\epsilon(\Gamma_k^{BCR}) = (-1)^k$.
The graph $(\Gamma_k^{BCR})_{\rho_a}$ has exactly the same vertex-orientation than $\Gamma_k$, 
and $(\Gamma_k^{BCR})_{\rho_b}$ has the opposite orientation at each trivalent vertex, so that
$(-1)^k[(\Gamma_k^{BCR})_{\rho_b}]=[(\Gamma_k^{BCR})_{\rho_a}]=[\Gamma_k]$.
Therefore, $w_{BCR}(\Gamma_k) = 1+(-1)^k $.
\end{proof}

\subsection{Linear extension of $w_{BCR}$ to $\diagrammes$}
\begin{lm}\label{STU}
The map $w_{BCR}$ induces a linear form on $\diagrammes$.
\end{lm}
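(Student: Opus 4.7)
The plan is to verify that $w_{BCR}$ respects the three defining relations AS, IHX, and STU of $\diagrammes$, via purely combinatorial analysis of the signed sum defining $w_{BCR}$.

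The AS relation is essentially built in. Reversing the vertex-orientation at a single trivalent vertex of $\Gamma_J$ only affects the sign $\epsilon_3(\Gamma_J,\Gamma,\rho)$, because the construction of $\Gamma_\rho$ from $(\Gamma,\rho)$ prescribes a definite cyclic order $(e_+,\ell_+,f_-)$ at each external vertex, independent of the vertex-orientation chosen for $\Gamma_J$. Hence every $\epsilon_3$ flips sign simultaneously under AS, giving $w_{BCR}(\overline{\Gamma_J}) = -w_{BCR}(\Gamma_J)$; the other ingredients $\epsilon(\Gamma)$, $\epsilon_2(\Gamma,\rho)$, and the cardinalities are independent of the orientation data of $\Gamma_J$.

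For STU, the plan is to construct a sign-preserving matching between BCR triples contributing to $w_{BCR}(\Gamma)$ and BCR triples contributing to $w_{BCR}(\Gamma_1) - w_{BCR}(\Gamma_2)$. In the STU picture, $\Gamma$ has a univalent vertex $U$ connected by an edge to a trivalent vertex $V$ with two free half-edges, while $\Gamma_1$ and $\Gamma_2$ replace this configuration by two univalent vertices on the knot line, appearing in opposite orders. On the BCR side, a triple $(\Gamma',\sigma,\rho)\in\mathcal G(\Gamma,j)$ has an external vertex $V$ with cycle-edges and a leg $\ell$ from an internal univalent vertex $U$, whereas triples contributing to $\mathcal G(\Gamma_i,j_i)$ have two separate univalent internal vertices, each with its own leg to an external vertex elsewhere on the cycle. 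I plan to describe an explicit ``collapse'' operation that merges $V$ together with $U$ and reroutes the cycle-neighbours to produce the two new BCR topologies appearing in $\Gamma_1$ and $\Gamma_2$, and to verify that the product $\epsilon(\Gamma')\cdot\epsilon_2(\Gamma',\rho)\cdot\epsilon_3(\Gamma_J,\Gamma',\rho)$ picks up a relative minus sign between the two resolutions precisely because swapping the relative order of two adjacent univalent vertices in $\rho$ flips $\epsilon_2$.

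For IHX, I plan to exploit the fact that IHX is a formal consequence of STU and AS in the standard theory of Jacobi diagrams whenever one of the three trivalent vertices in the local region is connected (possibly indirectly) to a univalent vertex, via the usual trick of sliding a univalent leg past the I/H/X region and applying STU repeatedly, combined with the AS antisymmetry already established. For a purely trivalent local region, one would verify the identity directly via a three-way sign cancellation among the corresponding external-vertex configurations in BCR diagrams. I expect the STU step to be the main obstacle: carefully identifying which BCR topologies realize each of $\Gamma, \Gamma_1, \Gamma_2$, and tracking how all three signs $\epsilon(\Gamma')$, $\epsilon_2(\Gamma',\rho)$, and $\epsilon_3(\Gamma_J,\Gamma',\rho)$ transform under the proposed collapse operation so that the single minus sign between $\Gamma_1$ and $\Gamma_2$ emerges cleanly.
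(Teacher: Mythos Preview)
Your overall strategy matches the paper's: AS is immediate, IHX is reduced to STU via Bar-Natan's argument (with the purely trivalent case handled separately because $w_{BCR}$ vanishes there---every external vertex of a BCR diagram carries a leg, so $\Gamma_\rho$ can never have a trivalent connected component), and STU is the real work. However, your STU sketch contains two concrete errors that would prevent it from going through as written.

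First, your description of the triples in $\mathcal G(\Gamma_i,j_i)$ is wrong. You write that the two Jacobi-univalent vertices $v,w$ become ``two separate univalent internal vertices, each with its own leg to an external vertex elsewhere on the cycle.'' In fact $v$ and $w$ are internal vertices of the BCR diagram, but they may be of \emph{any} of the types 2--5 of Definition~\ref{Def-BCR}: univalent, bivalent, or trivalent. The single Jacobi edge at $v$ may be a leg or a cycle external edge, and $v$ may in addition carry one or two internal edges invisible in $\Gamma_J^{(i)}$. This matters for everything that follows.

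Second, and more seriously, your mechanism for producing the minus sign is not valid in general. You assert that ``swapping the relative order of two adjacent univalent vertices in $\rho$ flips $\epsilon_2$.'' But $\epsilon_2(\Gamma,\rho)=\prod_{(a,b)\in E_i(\Gamma)}\sgn(\rho(b)-\rho(a))$ is a product over \emph{internal} edges, and the transposition $\rho\mapsto\rho\circ\rho_{v,w}$ changes the sign of a factor only for internal edges joining $v$ and $w$. So $\epsilon_2$ flips exactly when there is \emph{precisely one} internal edge between $v$ and $w$, and is unchanged otherwise. This dichotomy is the heart of the paper's argument: one splits $\mathcal G(\Gamma_J^{(i)},j_i)$ into $\mathcal G_1$ (one internal edge between $v$ and $w$) and $\mathcal G_2$ (none); the $\mathcal G_2$ parts cancel in the difference $w_{BCR}(\Gamma_J^{(1)})-w_{BCR}(\Gamma_J^{(2)})$, and only $2\cdot\mathcal G_1(\Gamma_J^{(1)})$ survives. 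The paper then splits $\mathcal G_1$ further according to the BCR-valences of $v$ and $w$: when both are trivalent (type~2), a second sign-reversing involution (swapping the two legs together with their feet) kills that contribution outright; only the remaining piece $\mathcal G_1^b$ is placed in bijection with $\mathcal G(\Gamma_J,j)$, via an explicit contraction of the internal edge $vw$ into the new external vertex $t$, with the extra edge $h$ supplied by the leg. Your ``collapse'' is the inverse of this last bijection, but without the $\mathcal G_1/\mathcal G_2$ split and the separate $\mathcal G_1^a$ cancellation you cannot account for all the triples on the $\Gamma_1,\Gamma_2$ side, and the sign bookkeeping will not close.
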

\begin{proof}
It suffices to prove that $w_{BCR}$ is compatible with the relations of Definition \ref{ASSTU}. 
The compatibility with the AS relation is immediate. 
For diagrams with only trivalent vertices, the IHX relation is immediate since $w_{BCR}$ is zero on the three involved diagrams.
Bar-Natan \cite[Theorem 6]{Bar-NatanTop} proved that the IHX relation is a consequence of the STU relation for diagrams 
of $\diagrammesc$. 

Let us now prove the STU relation.
Let $\Gamma_J$, $\Gamma_J^{(1)}$ and $\Gamma_J^{(2)}$ be three Jacobi diagrams connected by the 
STU relation of Definition \ref{ASSTU}, as in Figure \ref{figureSTU}. 
We are going to prove that $w_{BCR}(\Gamma_J) = w_{BCR}(\Gamma_J^{(1)}) - w_{BCR}(\Gamma_J^{(2)})$. 
Let the vertices $v$, $w$, $t$ and $u$ and the edges $e$, $f$ and $h$ be as in Figure \ref{figureSTU}.
\begin{figure}[H]\centering
\gammatrois \ \ \gammaun \ \ \ \ \gammadeux
\caption{}
\label{figureSTU}
\end{figure}

We have a natural identification $E(\Gamma_J^{(1)})\cong E(\Gamma_J^{(2)}) \cong E(\Gamma_J)\setminus \{h\}$.
Fix a numbering $j_1$ of $\Gamma_J^{(1)}$, and let $j_2$ the associated numbering of $\Gamma_J^{(2)}$.
Fix $i_0\in\und{3k}\setminus j_1(E(\Gamma_J^{(1)}))$ and let $j$ denote the numbering of $\Gamma_J$ such that $j(h) = i_0$ and that induces $j_1$ on $\Gamma_J^{(1)}$.

For $i\in\{1,2\}$, split $\lettre(\Gamma_J^{(i)}, j_i)$ into $\lettre_1(\Gamma_J^{(i)}, j_i)$ and $\lettre_2(\Gamma_J^{(i)}, j_i)$, where 
\[\lettre_1(\Gamma_J^{(i)}, j_i)= \{(\Gamma,\sigma,\rho)\in\lettre(\Gamma_J^{(i)}, j_i) \mid
\text{There is exactly one internal edge between $v$ and $w$}\},\]
\[\lettre_2(\Gamma_J^{(i)}, j_i)=
\lettre(\Gamma_J^{(i)}, j_i)\setminus\lettre_1(\Gamma_J^{(i)}, j_i) .\]
For any $(\Gamma, \sigma, \rho)\in \lettre(\Gamma_J^{(1)}, j_1)$, 
let $\rho^*$ denote the ordering $\rho \circ \rho_{v,w}$, 
where $\rho_{v,w}$ is the transposition of $v$ and $w$. 
This induces a bijection $\big((\Gamma, \sigma, \rho)\in \lettre(\Gamma_J^{(1)}, j_1) \mapsto 
(\Gamma, \sigma, \rho^*)\in \lettre(\Gamma_J^{(2)}, j_2)
\big)$, which preserves the above decomposition 
$\lettre(\Gamma_J^{(i)}, j_i)=
\lettre_1(\Gamma_J^{(i)}, j_i)\sqcup\lettre_2(\Gamma_J^{(i)}, j_i)$.
Note that $\epsilon_3(\Gamma_J^{(1)},\Gamma, \rho) = \epsilon_3(\Gamma_J^{(2)},\Gamma,\rho^*)$ 
and that
 \[\epsilon_2(\Gamma,\rho^*) = \begin{cases} -\epsilon_2(\Gamma,\rho) & \text{if $(\Gamma,\sigma,\rho) \in \lettre_1(\Gamma_J^{(1)}, j_1)$,}\\
 \epsilon_2(\Gamma,\rho) & \text{if $(\Gamma,\sigma,\rho) \in \lettre_2(\Gamma_J^{(1)}, j_1)$.}\end{cases}\]
 This yields $w_{BCR}(\Gamma_J^{(1)}) - w_{BCR}(\Gamma_J^{(2)}) 
 = 2\sum\limits_{(\Gamma,\sigma,\rho)\in \lettre_1(\Gamma_J^{(1)}, j_1)} 
\frac{\epsilon(\Gamma)\epsilon_2(\Gamma,\rho)\epsilon_3(\Gamma_J^{(1)},\Gamma,\rho)}{2^{2k-\Card\left(E\left(\Gamma_J^{(1)}\right)\right)}}.$

Let $\lettre_1^a(\Gamma_J^{(1)}, j_1)$ denote 
the set of ordered and numbered BCR diagrams $(\Gamma,\sigma, \rho)\in \lettre_1(\Gamma_J^{(1)}, j_1)$ 
such that $v$ and $w$ are both trivalent in $\Gamma$, and set 
$\lettre_1^b(\Gamma_J^{(1)}, j_1)= \lettre_1(\Gamma_J^{(1)}, j_1)\setminus \lettre_1^{a}(\Gamma_J^{(1)}, j_1)$.
For any $(\Gamma,\sigma, \rho)\in \lettre_1^{a}(\Gamma_J^{(1)}, j_1)$, 
let $x$ and $y$ denote the univalent vertices respectively adjacent to $v$ and $w$ as in Figure \ref{gunaa},
and set $\rho^* = \rho\circ \rho_{x,y}\circ \rho_{v,w}$ and $\sigma^* = \sigma\circ \rho_{e,f}$.
Since there is only one internal edge from $v$ to $w$, 
there are internal edges $(v',v)$ and $(w, w')$ where $v'$ and $w'$ are neither $v$ nor $w$,
and $\epsilon_2(\Gamma,\rho^*) = - \epsilon_2(\Gamma,\rho)$.
The orientation of the possible external vertices did not change, so
$\epsilon_3(\Gamma_J^{(1)},\Gamma,\rho) = \epsilon_3(\Gamma_J^{(1)},\Gamma, \rho^*)$. 
Since $\big((\Gamma,\sigma,\rho)\in \lettre_1^a(\Gamma_J^{(1)}, j_1)
 \mapsto (\Gamma,\sigma^*, \rho^*) \in \lettre_1^a(\Gamma_J^{(1)}, j_1)\big)$
is a bijection, this yields 
\[\sum\limits_{(\Gamma,\sigma,\rho)\in \lettre_1^a(\Gamma_J^{(1)}, j_1)} 
\frac{\epsilon(\Gamma)\epsilon_2(\Gamma,\rho)\epsilon_3(\Gamma_J^{(1)},\Gamma,\rho)}{2^{2k-\Card(E(\Gamma_J^{(1)}))}}=0.\]

\begin{figure}[H]
\centering
\guna
\caption{Notations for a graph of $\lettre_1^a(\Gamma_J^{(1)}, j_1)$.}\label{gunaa}
\end{figure}

We now define a bijection \[\big((\Gamma,\sigma,\rho) \in \lettre^b_1(\Gamma_J^{(1)}, j_1) 
\mapsto (\Gamma^*, \sigma^*, \rho^*) \in \lettre(\Gamma_J, j)\big).\]
For any $(\Gamma, \sigma, \rho) \in \lettre_1^b(\Gamma_J^{(1)}, j_1)$, let $\Gamma^*$
denote the BCR diagram defined as follows: \begin{itemize}
\item if $v$ and $w$ are both bivalent, $\Gamma^*$ is obtained as in Figure \ref{figureBB},
\begin{figure}[H]
\centering
\gammaBBa \ \ \ \ \gammaBBastar \\
\gammaBBb \ \ \ \ \gammaBBbstar 
\caption{}\label{figureBB}
\end{figure}

\item if $v$ is trivalent and $w$ bivalent, $\Gamma^*$ is obtained as in Figure \ref{figureTB},
\begin{figure}[H]
\centering
\gammaTBa \ \ \ \ \gammaTBastar \\
\gammaTBb \ \ \ \ \gammaTBbstar 
\caption{}\label{figureTB}
\end{figure}
\item if $v$ is bivalent and $w$ trivalent, $\Gamma^*$ is obtained as in Figure \ref{figureBT},

\begin{figure}[H]
\centering
\gammaBTa \ \ \ \ \gammaBTastar \\
\gammaBTb \ \ \ \ \gammaBTbstar 
\caption{}\label{figureBT}
\end{figure}
\end{itemize}
The ordering $\rho$ induces a natural ordering $\rho^*$ of $\Gamma^*$. Note that $E_e(\Gamma^*)\setminus\{h\}\cong E_e(\Gamma)$. 
Let $\sigma^*$ denote the numbering of $\Gamma^*$ that coincide with $\sigma$ on $E_e(\Gamma^*)\setminus\{h\}$ and such that $\sigma^*(h) =i_0$,
so that $(\Gamma^*, \sigma^*, \rho^*) \in \lettre(\Gamma_J, j)$. 
Let us check that $\epsilon(\Gamma)\epsilon_2(\Gamma,\rho)\epsilon_3(\Gamma_J^{(1)},\Gamma, \rho) = 
\epsilon(\Gamma^*)\epsilon_2(\Gamma^*,\rho^*)\epsilon_3(\Gamma_J,\Gamma^*, \rho^*)$.

\begin{itemize}
\item If $v$ and $w$ are bivalent, since $\Gamma^*$ has one more external edge and one more trivalent vertex than $\Gamma$, $\epsilon(\Gamma)= \epsilon(\Gamma^*)$. 
\begin{itemize}
\item If there is an internal edge from $v$ to $w$ as in the first row of Figure \ref{figureBB}, 
$\epsilon_2(\Gamma^*,\rho^*) = \epsilon_2(\Gamma, \rho)$ since $\rho(w)-\rho(v)$ is positive. 
The orientation of the vertex $t$ is given by the cyclic order $(e,h,f)$ as in $\Gamma_J$, 
and $\epsilon_3(\Gamma_J,\Gamma^*, \rho^*) = \epsilon_3(\Gamma_J^{(1)}, \Gamma,\rho)$.
\item If there is an internal edge from $w$ to $v$ as in the second row of Figure \ref{figureBB}, 
$\epsilon_2(\Gamma^*, \rho^*) = -\epsilon_2(\Gamma, \rho)$ since $\rho(v)-\rho(w)$ is negative. 
The orientation of the vertex $t$ is given by the cyclic order $(f,h,e)$. 
It is the opposite of the orientation of $t$ in $\Gamma_J$, 
and $\epsilon_3(\Gamma_J,\Gamma^*, \rho^*) =- \epsilon_3(\Gamma_J^{(1)}, \Gamma,\rho)$.
\end{itemize}
\item If $v$ is trivalent and $w$ is bivalent, since $\Gamma^*$ has one more external edge than $\Gamma$ and as much trivalent vertices, $\epsilon(\Gamma)=- \epsilon(\Gamma^*)$.
\begin{itemize}
\item If there is an internal edge from $v$ to $w$ as in the first row of Figure \ref{figureTB}, 
$\epsilon_2(\Gamma^*,\rho^*) = \epsilon_2(\Gamma, \rho)$ since $\rho(w)-\rho(v)$ is positive. 
The orientation of the vertex $t$ is given by the cyclic order $(h,e,f)$.
It is the opposite of the orientation of $t$ in $\Gamma_J$, 
and $\epsilon_3(\Gamma_J,\Gamma^*, \rho^*) = -\epsilon_3(\Gamma_J^{(1)}, \Gamma,\rho)$.
\item If there is an internal edge from $w$ to $v$ as in the second row of Figure \ref{figureTB}, 
$\epsilon_2(\Gamma^*, \rho^*) = -\epsilon_2(\Gamma, \rho)$ since $\rho(v)-\rho(w)$ is negative. 
The orientation of the vertex $t$ is given by the cyclic order $(f,e,h)$. 
It is the orientation of $t$ in $\Gamma_J$, 
and $\epsilon_3(\Gamma_J, \Gamma^*, \rho^*) = \epsilon_3(\Gamma_J^{(1)}, \Gamma,\rho)$.
\end{itemize}
\item If $v$ is bivalent and $w$ is trivalent, 
the same argument yields $\epsilon(\Gamma^*) = -\epsilon(\Gamma)$, 
and we prove as above that 
$\epsilon_2(\Gamma^*,\rho^*)\epsilon_3(\Gamma_J,\Gamma^*,\rho^*) =
- \epsilon_2(\Gamma, \rho)\epsilon_3(\Gamma_J^{(1)}, \Gamma,\rho)$ in both cases of Figure \ref{figureBT}.

\end{itemize}

Since $\Gamma_J$ has one edge more than $\Gamma_J^{(1)}$, 
\begin{eqnarray*}
\sum\limits_{(\Gamma,\sigma,\rho)\in \lettre_1^b(\Gamma_J^{(1)}, j_1)} 
\frac{\epsilon(\Gamma)\epsilon_2(\Gamma,\rho)\epsilon_3(\Gamma_J^{(1)},\Gamma,\rho)}{2^{2k-1-\Card(E(\Gamma_J^{(1)}))}}
&=&
\sum\limits_{(\Gamma^*,\sigma^*,\rho^*)\in\lettre(\Gamma_J, j)}
\frac{\epsilon(\Gamma^*)\epsilon_2(\Gamma^*,\rho^*)\epsilon_3(\Gamma_J,\Gamma^*,\rho^*)}{2^{2k-\Card(E(\Gamma_J))}}\\
&=&w_{BCR}(\Gamma_J).\end{eqnarray*}
This yields $w_{BCR}(\Gamma_J^{(1)}) -w_{BCR}(\Gamma_J^{(2)}) = w_{BCR}(\Gamma_J)$ and concludes the proof of Lemma \ref{STU}.\qedhere
\end{proof}
\subsection{Restriction to connected diagrams}

\begin{lm}\label{conn}
If $\Gamma_J$ is a non-trivial product of diagrams, then $w_{BCR}(\Gamma_J) = 0$.
\end{lm}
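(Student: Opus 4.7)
The plan is to prove that $w_{BCR}(\Gamma_J) = 0$ by exhibiting a fixed-point-free sign-reversing involution on $\lettre(\Gamma_J, j)$.

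For any $(\Gamma, \sigma, \rho)\in\lettre(\Gamma_J, j)$, the identification $\Gamma_\rho = \Gamma_J$ partitions $V(\Gamma)$ as the disjoint union of the vertex sets of the two components of $\Gamma_J = \Gamma_1 \cdot \Gamma_2$. Writing $V_l = V_i(\Gamma)\cap V(\Gamma_l)$ for $l\in\{1,2\}$, the ordering of $\Gamma_J$ forces $\rho$ to put $V_1$ entirely before $V_2$. The external edges of $\Gamma$ coincide with the edges of $\Gamma_J$ and hence stay within a single component, whereas $\Gamma$ itself is connected by Definition \ref{Def-BCR}. Consequently the cycle of $\Gamma$ must contain at least one internal edge with endpoints in different components, which I will call a \emph{bridge}; moreover, each bridge contributes a predictable sign to $\epsilon_2(\Gamma,\rho)$, namely $+1$ for a bridge from $V_1$ to $V_2$ and $-1$ for a bridge from $V_2$ to $V_1$.

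The involution would be built by choosing a canonical bridge --- for example, the one whose $V_2$-endpoint is the $\rho$-smallest among all bridge endpoints in $V_2$ --- and performing a local surgery around it. The surgery either swaps the two arcs of the cycle meeting at this bridge (changing which type-$4$ and type-$5$ endpoints are joined by internal edges), or, in a complementary case, contracts and re-expands the bridge together with a neighbouring external edge, so as to produce a new BCR diagram $\Gamma^*$ still satisfying $\Gamma^*_\rho = \Gamma_J$. These two local moves are modelled on the $\lettre_1^a$ and $\lettre_1^b$ constructions in the proof of Lemma \ref{STU}, which already handled local sign-reversing moves of this type in the STU setting.

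The main obstacle is to pin down the correct canonical surgery and to verify that it is involutive and flips the combined sign $\epsilon(\Gamma)\epsilon_2(\Gamma,\rho)\epsilon_3(\Gamma_J,\Gamma,\rho)$. As in the proof of Lemma \ref{STU}, this requires a detailed case analysis: the cases are indexed by the types (Definition \ref{Def-BCR}) of the two endpoints of the canonical bridge and of their neighbours in $\Gamma$, and in each case the three sign factors must be tracked by hand. An additional delicate point is to ensure that the involution is fixed-point-free, which may require refining the canonical-bridge choice when $\Gamma$ has internal symmetries; non-triviality of the product $\Gamma_J = \Gamma_1\cdot\Gamma_2$ should be what guarantees the existence of a bridge, and hence of a non-trivial surgery, in every triple of $\lettre(\Gamma_J,j)$.
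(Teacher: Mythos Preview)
Your plan is in the right spirit --- a sign-reversing involution on $\lettre(\Gamma_J,j)$ is exactly how the paper proceeds --- but the concrete surgeries you propose are not the ones that work, and the analogy with the $\lettre_1^a/\lettre_1^b$ moves from Lemma~\ref{STU} is misleading. Those STU moves either relabel vertices around an internal edge joining two \emph{consecutive} univalent vertices, or collapse/expand an edge in a way that changes the number of trivalent vertices of the underlying Jacobi diagram; neither operation is available here, since you must keep $\Gamma^*_{\rho^*}=\Gamma_J$ with the same vertex count. ``Swapping the two arcs of the cycle meeting at the bridge'' does not in general produce a BCR diagram, and there is no obvious reason the result would still satisfy $\Gamma^*_{\rho^*}=\Gamma_J$.

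The paper's involution is organised differently and avoids these problems. Rather than choosing a canonical bridge, it restricts $\Gamma$ to the vertex set $V_2$ coming from the second factor and picks the connected component $\Gamma_c$ of this restriction containing the external edge of minimal $\sigma$-label. There is then exactly one internal edge $e_1=(v,w)$ entering $\Gamma_c$ from $V_1$, and the move is purely local at the landing vertex $w$: in two of the three cases one \emph{redirects} $e_1$ to the neighbouring internal vertex of $\Gamma_c$ (the other end of the external edge at $w$, or the foot of the leg at $w$), and in the remaining case one leaves $\Gamma$ unchanged and swaps two internal vertices in $\rho$. Because the redirection stays inside $\Gamma_c\subset V_2$ while $v\in V_1$, the sign $\epsilon_2$ is controlled by the product ordering, and the component $\Gamma_c$ (hence the canonical choice) is preserved by the move, guaranteeing involutivity. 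Your plan is missing precisely this piece: the canonical object must be invariant under the surgery, and choosing a bridge by its $V_2$-endpoint would not obviously survive a move that changes which edge is the bridge.
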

\begin{proof}
Let $\Gamma_J$ be a non-trivial product, and let $\Gamma_J^{(1)}$ and $\Gamma_J^{(2)}$
be two Jacobi diagrams such that $\Gamma_J$ is the product of $\Gamma_J^{(1)}$ and $\Gamma_J^{(2)}$. Let $j$ be a numbering of $\Gamma_J$.
We are going to define an involution $\left((\Gamma,\sigma, \rho)\in \lettre(\Gamma_J, j) \mapsto
(\Gamma^*, \sigma^*, \rho^*) \in \lettre(\Gamma_J, j)\right)$.

For any $(\Gamma, \sigma,\rho)\in \lettre(\Gamma_J, j)$, 
let $\sommets = V_1\sqcup V_2$ be the partition of $\sommets$ such that 
the vertices of $V_i$ correspond to the vertices of $\Gamma_J^{(i)}$ in $\Gamma_J$.
Let $\Gamma'$ denote the graph obtained from $\Gamma$ by keeping only the vertices of $V_2$ and the edges
between such vertices, and
let $\Gamma_c$ denote the connected component of $\Gamma'$ that contains the external edge with minimal
$\sigma$.
Let $V_3$ denote the set of vertices of $\Gamma_c$.
By construction of BCR diagrams, there is exactly one internal edge $e_1=(v,w)$ from $V_1$ to an internal vertex $w$ of $V_3$.
Note that any element of $\rho(V_1\cap V_i(\Gamma))$ is before any element of $\rho(V_3\cap V_i(\Gamma))$

Let $\Gamma^*$ denote the BCR diagram defined as follows. \begin{itemize}

\item If $w$ is bivalent in $\Gamma$, 
and if we have an external edge $e=(w, w_2)$ from $w$ to another bivalent vertex $w_2$, then $w_2$ is in $V_3$ since $\Gamma_c$ is connected. Set $w_1=w$. The graph
$\Gamma^*$ is obtained from $\Gamma$ after replacing 
the internal edge $e_1=(v,w_1)$ with an internal edge from $v$ to $w_2$ as in Figure \ref{figgncun}, and $\rho^*$ 
and $\sigma^*$ are naturally deduced from $\rho$ and $\sigma$. 
Since $w_1$ and $w_2$ are in $V_3$ and $v$ in $V_1$, 
$\rho(w_2)-\rho(v)$ and $\rho(w_1)-\rho(v)$ have the same sign, and
$\epsilon_2(\Gamma^*,\rho^*) = \epsilon_2(\Gamma,\rho)$. Furthermore, we have $\epsilon(\Gamma^*) = -\epsilon(\Gamma)$ since $\Gamma^*$ has the same external edges as $\Gamma$ but one more (internal) trivalent vertex. 
Since nothing changed around the external trivalent vertices, $\epsilon_3(\Gamma_J,\Gamma^*,\rho^*) = \epsilon_3(\Gamma_J, \Gamma,\rho)$.
\item If $w$ is trivalent in $\Gamma$, set $w_2=w$.
There is a leg $e=(w_1,w_2)$ from a univalent vertex $w_1$ to $w_2$. 
Since $\Gamma_c$ is connected, $w_1$ is in $V_3$.
In this case, $\Gamma^*$
is the graph obtained from $\Gamma$ after replacing the internal edge $e_1=(v, w_2)$ with $(v,w_1)$
 as in Figure \ref{figgncun},
and $\rho^*$ and $\sigma^*$ are naturally determined by $\rho$ and $\sigma$. 
As above, we have $(\epsilon(\Gamma^*),\epsilon_2(\Gamma^*,\rho^*), \epsilon_3(\Gamma_J,\Gamma^*, \rho^*))
= (-\epsilon(\Gamma),\epsilon_2(\Gamma,\rho), \epsilon_3(\Gamma_J,\Gamma, \rho))$.
\begin{figure}[H]
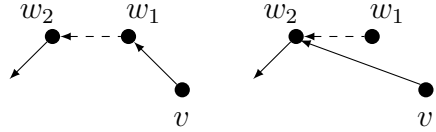

\centering
\captionsetup{justification=centering}
\gnccyc \ \ \ \gncleg
\centering
\caption{Notations for the first two cases.\\
In the first case, $\Gamma$ is on the left and $\Gamma^*$ on the right. \\
In the second case, this is the other way.}\label{figgncun}
\end{figure}

\item Otherwise, $\Gamma_c$ is as in Figure \ref{Fgnc} and $w$ is connected to an external trivalent vertex $t$, where a leg from a univalent vertex $x$ arrives. 
In this case, $\Gamma^*= \Gamma$ and $\rho^* = \rho \circ \rho_{x,w}$, where $\rho_{x,w}$ is the transposition of $x$ and $w$. 
We have $\rho^*(w) - \rho^*(v)= \rho(x)-\rho(v)$.
This expression has the same (positive) sign as 
$\rho(w)-\rho(v)$ since $x$ and $w$ are in $V_3$ and $v$ is in $V_1$. 
Therefore, $\epsilon_2(\Gamma,\rho^*) = \epsilon_2(\Gamma,\rho)$. 
Since we only changed the order of two internal vertices adjacent to the trivalent vertex $t$, 
we have $\epsilon_3(\Gamma_J,\Gamma, \rho^*) = -\epsilon_3(\Gamma_J,\Gamma,\rho)$.
\end{itemize}
\begin{figure}[H]
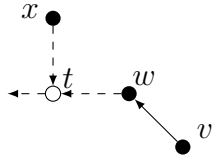

\centering
\gnc
\caption{Notations for the third case.}\label{Fgnc}
\end{figure}

This yields an involution $\left((\Gamma,\sigma, \rho)\in \lettre(\Gamma_J, j) 
\mapsto (\Gamma^*, \sigma^*, \rho^*) \in \lettre(\Gamma_J, j)\right)$ as announced
and we have $\epsilon(\Gamma^*)\epsilon_2(\Gamma^*,\rho^*)\epsilon_3(\Gamma_J,\Gamma^*, \rho^*)=
- \epsilon(\Gamma) \epsilon_2(\Gamma,\rho)\epsilon_3(\Gamma_J,\Gamma,\rho)$ for any $(\Gamma,\sigma, \rho)
\in \lettre(\Gamma_J, j)$. This concludes the proof of the lemma.\qedhere
\end{proof}

Lemmas \ref{calculgk}, \ref{STU} and \ref{conn} and Lemma \ref{wc} conclude the proof of Proposition \ref{th2} so that Theorem \ref{th1} is proved.

	\bibliographystyle{alpha}
	\bibliography{article3}
\end{document}